\numberwithin{equation}{section}
\theoremstyle{plain}
\newtheorem{theorem}{Theorem}[section]
\newtheorem{lemma}[theorem]{Lemma}
\newtheorem{corollary}[theorem]{Corollary}
\theoremstyle{definition}
\newtheorem{remark}[theorem]{Remark}
\newtheorem{?}[theorem]{Problem}
\def\boxit#1{\leavevmode\hbox{\vrule\vtop{\vbox{\kern.33333pt\hrule
    \kern1pt\hbox{\kern1pt\vbox{#1}\kern1pt}}\kern1pt\hrule}\vrule}}
\newcommand{\f}[1]{\ifthenelse{\equal{#1}{1}}{(q;q)_\infty}{(q^{#1};q^{#1})_{\infty}}}
\newcommand{\ee}[1]{e\left(#1\right)}
\begin{document}
\title[Some inequalities for Garvan's bicrank function]{Some inequalities for Garvan's bicrank function of 2-colored partitions}

\author[S. Chern]{Shane Chern}
\address[Shane Chern]{Department of Mathematics, The Pennsylvania State University, University Park, PA 16802, USA}
\email{shanechern@psu.edu}

\author[D. Tang]{Dazhao Tang}

\address[Dazhao Tang]{College of Mathematics and Statistics, Chongqing University, Huxi Campus LD206, Chongqing 401331, P.R. China}
\email{dazhaotang@sina.com; dazhaotang@cqu.edu.cn}

\author[L. Wang]{Liuquan Wang}

\address[Liuquan Wang]{School of Mathematics and Statistics, Wuhan University, Wuhan 430072, P.R. China}
\email{mathlqwang@163.com; wanglq@whu.edu.cn}

\date{\today}

\begin{abstract}
In order to provide a unified combinatorial interpretation of congruences modulo $5$ for 2-colored partition functions, Garvan introduced a bicrank statistic in terms of weighted vector partitions. In this paper, we obtain some inequalities between the bicrank counts $M^{*}(r,m,n)$ for $m=2$, $3$ and $4$ via their asymptotic formulas and some $q$-series techniques. These inequalities are parallel to Andrews and Lewis' results on the rank and crank counts for ordinary partitions.
\end{abstract}

\keywords{2-Colored partitions; Garvan's bicranks; inequalities; asymptotics.}

\subjclass[2010]{05A17, 11P55, 11P82, 11P83.}

%{\footnotesize\noindent \textit{Preprint} (2018). Available at \arxiv{1804.01651}.}
%
%\bigskip \bigskip

\maketitle

\section{Introduction}

A \emph{partition} \cite{Andr1976} of a nonnegative integer $n$ is a finite weakly decreasing sequence of positive integers whose sum equals $n$. One of Ramanujan's most beautiful works \cite{Ram1927} deals with congruences modulo $5$, $7$ and $11$ for the partition function $p(n)$, which counts the number of partitions of $n$. In 1944, Dyson \cite{Dys1944} defined the \emph{rank} of a partition and conjectured that this partition statistic can combinatorially interpret Ramanujan's congruences modulo $5$ and $7$. This was later confirmed by Atkin and Swinnerton-Dyer \cite{ASD1954}. Dyson also conjectured a partition statistic, which he called \emph{crank}, to provide a unified combinatorial interpretation of Ramanujan's three congruences. This partition statistic was found after over four decades by Andrews and Garvan \cite{AG1988}. In recent years, the two partition statistics motivate a tremendous amount of research.

A partition is called a \emph{$2$-colored partition} if each part is receiving a color from the set of two prescribed colors. Let $p_{-2}(n)$ count the number of 2-colored partitions of $n$. Then
\begin{align*}
\sum_{n=0}^{\infty}p_{-2}(n)q^{n} &=\dfrac{1}{(q;q)_{\infty}^{2}}.
\end{align*}
Here and in what follows, we assume $|q|<1$ and adopt the following customary notation on partitions and $q$-series:
\begin{align*}
(a;q)_{\infty} &:=\prod_{n=0}^{\infty}(1-aq^{n}),\\
(a_{1},a_{2},\cdots,a_{n};q)_{\infty}& :=(a_{1};q)_{\infty}(a_{2};q)_{\infty}\cdots(a_{n};q)_{\infty}.
\end{align*}

The following Ramanujan-type congruences for $p_{-2}(n)$ were proved by Hammond and Lewis \cite{HL2004} as well as Baruah and Sarmah \cite[Eq.~(5.4)]{BS2013}:
\begin{align}\label{partition pairs mod 5}
p_{-2}(5n+2)\equiv p_{-2}(5n+3)\equiv p_{-2}(5n+4)\equiv0\pmod{5}.
\end{align}
Moreover, Hammond and Lewis \cite{HL2004} defined a partition statistic which they named ``birank'', and showed that this birank may give a combinatorial proof of the above three congruences.

Analogous to the Andrews--Garvan--Dyson crank function for partitions, Andrews \cite{And2008} introduced his version of bicrank function for $2$-colored partitions, which can combinatorially interpret
$$p_{-2}(5n+3)\equiv 0\pmod{5}.$$
In 2010, Garvan \cite{Gar2010} further modified Andrews' bicrank function so that it can give a unified combinatorial proof of all the three congruences in \eqref{partition pairs mod 5}. We refer to \cite[Eq.~(6.12)]{Gar2010} for the definition of Garvan's bicrank function.

Due to the universality of Garvan's bicrank function, throughout this paper, what we mean by bicrank will be Garvan's version. Let $M^{*}(m,n)$ count the number of $2$-colored partition of $n$ with bicrank $m$. Garvan showed that $M^{*}(m,n)$ satisfies the following generating function (cf.~\cite[Eq.~(6.17)]{Gar2010})
\begin{align}
\sum_{m=-\infty}^{\infty}\sum_{n=0}^{\infty}M^{*}(m,n)z^{m}q^{n}=\dfrac{(q;q)_{\infty}^{2}}{(zq,z^{-1}q,z^{2}q,z^{-2}q;q)_{\infty}},\label{bicrank:gf}
\end{align}
from which he proved that, for any integer $n\geq0$,
\begin{align*}
M^{*}(0,5,5n+2) &=M^{*}(1,5,5n+2)=\cdots=M^{*}(4,5,5n+2)=\dfrac{p_{-2}(5n+2)}{5},\\
M^{*}(0,5,5n+4) &=M^{*}(1,5,5n+4)=\cdots=M^{*}(4,5,5n+4)=\dfrac{p_{-2}(5n+4)}{5},\\
M^{*}(0,5,5n+3) &\equiv M^{*}(1,5,5n+3)\equiv\cdots\equiv M^{*}(4,5,5n+3)\pmod{5},
\end{align*}
where $M^{*}(j,k,n):=\sum_{m\equiv j\pmod{k}}M^{*}(m,n)$ is the number of 2-colored partitions of $n$ with bicrank congruent to $j$ modulo $k$.

On the other hand, in 2000, Andrews and Lewis \cite{AL2000} studied the sign patterns for the rank and crank counts of ordinary partitions. The following is an example of their results for the crank function.
\begin{theorem}[Andrews--Lewis]
For all $n\geq0$,
\begin{align*}
M(0,2,2n) &>M(1,2,2n),\\
M(0,2,2n+1) &<M(0,2,2n+1),
\end{align*}
where $M(r,m,n)$ counts the number of partitions of $n$ with crank congruent to $r$ modulo $m$.
\end{theorem}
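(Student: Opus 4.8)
The plan is to isolate the parity of the crank by evaluating the crank generating function at $z=-1$. Recall the classical Andrews--Garvan crank generating function $\sum_{m,n}M(m,n)z^{m}q^{n}=(q;q)_\infty/\big((zq;q)_\infty(z^{-1}q;q)_\infty\big)$, the ordinary-partition analogue of \eqref{bicrank:gf}. Setting $z=-1$ collapses the even/odd crank counts into a single alternating sum, giving
\[
\sum_{n\ge 0}\big(M(0,2,n)-M(1,2,n)\big)q^{n}=\frac{(q;q)_\infty}{(-q;q)_\infty^{2}}.
\]
Using Euler's identity $(-q;q)_\infty=1/(q;q^{2})_\infty$, I would rewrite the right-hand side as $f(q):=(q;q)_\infty(q;q^{2})_\infty^{2}=(q;q)_\infty^{3}/(q^{2};q^{2})_\infty^{2}$, so that $M(0,2,n)-M(1,2,n)=[q^{n}]f(q)$. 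The (correctly stated) theorem asks that this coefficient have the sign of $(-1)^{n}$; in particular the second displayed inequality should read $M(0,2,2n+1)<M(1,2,2n+1)$.

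Next I would convert the alternating sign claim into a genuine positivity claim via the substitution $q\mapsto -q$. Since $[q^{n}]f(-q)=(-1)^{n}[q^{n}]f(q)$, the theorem is equivalent to showing that \emph{every} coefficient of $f(-q)$ is strictly positive. A short computation (splitting $(-q;-q)_\infty$ into its even and odd factors) gives $f(-q)=(q^{2};q^{2})_\infty(-q;q^{2})_\infty^{3}$. The decisive step is to peel off a theta function using the Jacobi triple product: specializing it at $z=1$ yields $\varphi(q):=\sum_{n=-\infty}^{\infty}q^{n^{2}}=(q^{2};q^{2})_\infty(-q;q^{2})_\infty^{2}$, and therefore $f(-q)=\varphi(q)\cdot(-q;q^{2})_\infty$, a product of two power series each having manifestly nonnegative coefficients.

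To finish, write $\varphi(q)=\sum_{m}e_{m}q^{m}$ with $e_{0}=1$, $e_{k^{2}}=2$ for $k\ge 1$, and $e_{m}=0$ otherwise, and $(-q;q^{2})_\infty=\sum_{n}d_{n}q^{n}$, where $d_{n}$ counts the partitions of $n$ into distinct odd parts. Then $[q^{n}]f(-q)=\sum_{m\ge 0}e_{m}d_{n-m}$, a sum of nonnegative terms. I would then record the elementary lemma that $d_{n}\ge 1$ for all $n\ge 0$ except $n=2$ (for odd $n$ use the single part $n$; for even $n\ge 4$ use the pair $\{1,n-1\}$; and $d_{2}=0$ since $2$ is not a sum of distinct odd parts). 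Consequently, for $n\ne 2$ the term $m=0$ already contributes $d_{n}\ge 1>0$, while for $n=2$ the term $m=1$ contributes $e_{1}d_{1}=2>0$; in every case $[q^{n}]f(-q)>0$, which is exactly the assertion.

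The main obstacle is spotting the Jacobi-triple-product factorization $\varphi(q)=(q^{2};q^{2})_\infty(-q;q^{2})_\infty^{2}$, which turns an alternating eta-quotient into a product with nonnegative coefficients; once this is in hand, the positivity is immediate apart from the single exceptional coefficient at $n=2$, handled by hand. This is a cleaner route than the asymptotic method used elsewhere in the paper, and it yields the inequalities for \emph{all} $n$ without a separate finite check.
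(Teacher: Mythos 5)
Your proof is correct. Note first that the paper itself supplies no proof of this statement: it is quoted as background from Andrews and Lewis \cite{AL2000}, so there is no internal argument to compare against; the closest analogue in the paper is the one-line proof of Theorem \ref{THM:mod 2}, which uses the same opening move (set $z=-1$ in the two-variable generating function, then replace $q$ by $-q$ and exhibit manifest positivity). For the bicrank that step already finishes the job, since $(q;q^2)_\infty^2\big|_{q\to-q}=(-q;q^2)_\infty^2$ is visibly positive; for the crank the resulting series $f(-q)=(q^2;q^2)_\infty(-q;q^2)_\infty^3$ is not obviously positive, and your extra idea --- peeling off $\varphi(q)=(q^2;q^2)_\infty(-q;q^2)_\infty^2$ via the Jacobi triple product so that $f(-q)=\varphi(q)\,(-q;q^2)_\infty$ is a product of nonnegative series --- is exactly the missing ingredient. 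Your handling of the single exceptional coefficient ($d_2=0$, patched by the term $e_1d_1=2$) is correct, as is your observation that the paper's second displayed inequality contains a typo and should read $M(0,2,2n+1)<M(1,2,2n+1)$. I verified the identities $f(q)=(q;q)_\infty^3/(q^2;q^2)_\infty^2$ and $f(-q)=(q^2;q^2)_\infty(-q;q^2)_\infty^3$ and the first few coefficients ($1,3,2,1,\dots$) agree. One small caveat worth recording: for $n=1$ the quantities $M(r,2,1)$ must be read off the generating function (with the usual convention $M(0,1)=-1$, $M(\pm1,1)=1$), under which the claimed inequality $-1<2$ still holds; this is the standard convention in \cite{AG1988} and does not affect your argument. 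Altogether this is a clean, fully elementary and self-contained proof, with no finite check beyond the $n=2$ coefficient.
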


In the same paper, Andrews and Lewis also established some inequalities between the crank counts modulo $4$ for ordinary partitions and conjectured several sign patterns for the rank counts modulo 3 as well as the crank counts modulo 3 and 4. Their conjectures were later confirmed by Kane \cite{Kan2004} and Chan \cite{chan2005}. Subsequently, Chan and Mao \cite{CM2016} obtained a refined result on the rank and crank counts modulo 3 through standard $q$-series techniques.

These works motivate us to investigate Garvan's bicrank counts modulo $2$, $3$ and $4$ and obtain similar inequalities (see Theorems \ref{THM:mod 2}, \ref{THM:mod 3} and \ref{THM:mod 4} below).

The remainder of this paper is organized as follows. In Section \ref{sec:main results}, we present our main results. In Section~\ref{sec:circle}, to prove the inequalities satisfied by Garvan's bicrank counts, we will apply the circle method, and hence a detailed background description will be given. We also provide some slight modifications of Chan's work in \cite{chan2005}. In the next two sections, we will show two asymptotic formulas and deduce the desired inequalities. Then in Section~\ref{sec:ele}, we provide some elementary proofs for several special cases of the inequalities. We conclude with some remarks in the last section.

\section{Main results}\label{sec:main results}
Along the direction of Andrews and Lewis \cite{AL2000}, we prove the following inequalities together with other results for bicrank counts.

\begin{theorem}\label{THM:mod 2}
For all $n\geq0$,
\begin{align*}
M^{*}(0,2,n)& >M^{*}(1,2,n)\quad\quad \emph{if}~n\equiv 0 \pmod{2},\\
M^{*}(0,2,n)& <M^{*}(1,2,n)\quad\quad \emph{if}~n\equiv 1 \pmod{2}.
\end{align*}
\end{theorem}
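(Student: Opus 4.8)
The plan is to reduce the whole statement to a single generating-function identity by specializing the bicrank generating function \eqref{bicrank:gf} at a primitive second root of unity. First I would set $z=-1$. Since $M^{*}(0,2,n)$ collects the terms with $m$ even and $M^{*}(1,2,n)$ those with $m$ odd, we have $\sum_{m}M^{*}(m,n)(-1)^{m}=M^{*}(0,2,n)-M^{*}(1,2,n)$, so the left side of \eqref{bicrank:gf} at $z=-1$ becomes the generating function of the quantity we care about. On the right side the four denominator factors specialize to $zq=-q$, $z^{-1}q=-q$, $z^{2}q=q$ and $z^{-2}q=q$, so the two copies of $(q;q)_{\infty}$ coming from $z^{\pm2}q$ cancel against the numerator $(q;q)_{\infty}^{2}$. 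This yields
\begin{align*}
\sum_{n=0}^{\infty}\bigl(M^{*}(0,2,n)-M^{*}(1,2,n)\bigr)q^{n}=\frac{1}{(-q;q)_{\infty}^{2}}.
\end{align*}
It then suffices to show that the coefficient of $q^{n}$ on the right carries the sign $(-1)^{n}$; equivalently, writing $F(q):=(-q;q)_{\infty}^{-2}$, that every coefficient of $F(-q)$ is strictly positive.

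Next I would put $F$ into a transparent product form. Using the elementary identity $(q;q)_{\infty}(-q;q)_{\infty}=(q^{2};q^{2})_{\infty}$ (from $(1-q^{n})(1+q^{n})=1-q^{2n}$), we get $F(q)=(q;q)_{\infty}^{2}/(q^{2};q^{2})_{\infty}^{2}$. Replacing $q$ by $-q$ fixes $(q^{2};q^{2})_{\infty}$ and turns $(q;q)_{\infty}$ into $(-q;-q)_{\infty}$, which splits over even and odd indices as $(-q;-q)_{\infty}=(q^{2};q^{2})_{\infty}(-q;q^{2})_{\infty}$. After cancellation this gives the clean formula
\begin{align*}
F(-q)=(-q;q^{2})_{\infty}^{2}=\prod_{k=1}^{\infty}(1+q^{2k-1})^{2}.
\end{align*}

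The last step is to verify that this product has strictly positive coefficients, and this is the only place needing a genuine (though short) argument. The coefficient of $q^{n}$ counts ordered pairs $(\lambda,\mu)$ of partitions into distinct odd parts with $|\lambda|+|\mu|=n$, so I must exhibit at least one such pair for each $n\geq0$: take the empty pair for $n=0$; take $\lambda=\{n\}$, $\mu=\emptyset$ when $n$ is odd; and take $\lambda=\{1,n-1\}$, $\mu=\emptyset$ when $n\geq4$ is even (these are distinct odd parts since $n-1\geq3$). The single exceptional value $n=2$, which is not itself a sum of distinct odd parts, is covered by $\lambda=\mu=\{1\}$. Hence $[q^{n}]F(-q)>0$ for all $n$, so $[q^{n}]F(q)$ has sign $(-1)^{n}$, which is exactly the two claimed inequalities. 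I expect the only real (and minor) obstacle to be isolating this lone non-representable integer $n=2$; the rest is routine product bookkeeping. This elementary route is available precisely because $m=2$ collapses \eqref{bicrank:gf} to one infinite product, whereas the $m=3,4$ cases will instead require the asymptotic machinery.
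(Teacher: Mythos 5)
Your proposal is correct and follows essentially the same route as the paper: specialize \eqref{bicrank:gf} at $z=-1$, simplify the product to $(q;q^2)_\infty^2$ (equivalently $1/(-q;q)_\infty^2$), replace $q$ by $-q$ to get the generating function for pairs of partitions into distinct odd parts, and conclude from positivity of its coefficients. Your explicit verification that every $n\geq 0$ (including the borderline case $n=2$) is represented by at least one such pair is a small but welcome addition, since the paper asserts the strict positivity without spelling it out.
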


This result is almost trivial. Taking $z=-1$ in \eqref{bicrank:gf} yields
\begin{align}\label{gf:mod 2}
\sum_{n=0}^{\infty}\left(M^{*}(0,2,n)-M^{*}(1,2,n)\right)q^{n}&=\dfrac{(q;q)_{\infty}^{2}}{(-q;q)_{\infty}^{2}(q;q)_{\infty}^{2}}=
\dfrac{(q;q)_{\infty}^{2}}{(q^{2};q^{2})_{\infty}^{2}}=(q;q^{2})_{\infty}^{2}.
\end{align}
We further take $q\to -q$, then the right-hand side becomes the generating function of pairs of partitions into distinct odd parts, and hence has its Fourier expansion coefficients being positive. This immediately leads to the desired result.

\begin{remark}
Recently, the second author and Fu \cite[Theorem 1.4]{FT2018} obtained two similar inequalities for the birank. Interestingly, assuming that $M_{2}(r,m,n)$ counts the number of $2$-colored partitions of $n$ with birank congruent to $r$ modulo $m$, then (cf.~\cite[Remark 1.2 and Eq.~(2.1)]{FT2018})
\begin{align}
M^{*}(0,2,n)-M^{*}(1,2,n)=M_{2}(0,2,n)-M_{2}(1,2,n).\label{birank-bicrank}
\end{align}
It would be interesting if one can find a combinatorial interpretation of \eqref{birank-bicrank}.
\end{remark}

\begin{remark}
In 2015, Kot{\v e}{\v s}ovec \cite[p.~13]{Kot2015} obtained the asymptotic formula for the coefficients of
$$\frac{1}{(-q;q)_\infty^m},$$
where $m$ is a positive integer. The $m=2$ case yields
\begin{equation*}
M^{*}(0,2,n)-M^{*}(1,2,n)\sim \frac{(-1)^n}{2^{3/2} 3^{1/4} n^{3/4}}\exp\left(\pi\sqrt{\frac{n}{3}}\right).
\end{equation*}
The interested readers may compare this formula with our Theorems \ref{th:main-asy} and \ref{th:main-asy-b}.
\end{remark}

The cases when $m=3$ and $4$ are more intriguing. We first have
\begin{theorem}\label{THM:mod 3}
For all $n\geq0$,
\begin{align}
M^{*}(0,3,n)&>M^{*}(1,3,n)\quad\quad  \emph{if}~n\equiv 0,2 \pmod{3},\label{mod 3:pos}\\
M^{*}(0,3,n)&<M^{*}(1,3,n)\quad\quad \emph{if}~n\equiv 1 \pmod{3},\label{mod 3:neg}
\end{align}
except for $n=5$.
\end{theorem}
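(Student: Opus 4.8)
The plan is to convert the two inequalities into a single statement about the sign of the coefficients of one eta-quotient, and then to read that sign off from an asymptotic formula produced by the circle method.

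First I would specialize $z=\zeta$ in \eqref{bicrank:gf}, where $\zeta=e^{2\pi i/3}$. Since the right-hand side of \eqref{bicrank:gf} is invariant under $z\mapsto z^{-1}$, the counts satisfy $M^{*}(m,n)=M^{*}(-m,n)$, whence $M^{*}(2,3,n)=M^{*}(1,3,n)$; together with $\zeta+\zeta^{2}=-1$ this shows that the specialization $z=\zeta$ produces exactly the generating function of $D(n):=M^{*}(0,3,n)-M^{*}(1,3,n)$. Using $\zeta^{2}=\zeta^{-1}$ and the evaluation $(\zeta q;q)_{\infty}(\zeta^{-1}q;q)_{\infty}=\prod_{k\ge1}(1+q^{k}+q^{2k})=(q^{3};q^{3})_{\infty}/(q;q)_{\infty}$, the denominator collapses and I obtain
\begin{align*}
\sum_{n=0}^{\infty}D(n)\,q^{n}=\frac{(q;q)_{\infty}^{4}}{(q^{3};q^{3})_{\infty}^{2}}=:F(q).
\end{align*}
Thus \eqref{mod 3:pos} and \eqref{mod 3:neg} are equivalent to the single claim that $D(n)>0$ when $n\equiv0,2\pmod 3$ and $D(n)<0$ when $n\equiv1\pmod3$, with the sole exception $n=5$, where one computes $D(5)=0$.

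Next I would feed $F(q)$ into the circle method. A growth-rate computation via the modular transformation of the Dedekind eta function shows that, as $q$ approaches a root of unity $e^{2\pi i h/k}$, the exponential order of $F$ is largest at the primitive cube roots of unity $q=e^{\pm2\pi i/3}$, where the factor $(q^{3};q^{3})_{\infty}^{-2}$ becomes singular; in fact near $q=1$ the quotient \emph{decays}, so the cusp $1/3$ and its conjugate dominate. The two conjugate major arcs then contribute complex-conjugate main terms, and their sum takes the shape
\begin{align*}
D(n)=g(n)\cos\!\left(\frac{2\pi n}{3}-\psi\right)+\text{(smaller error)},
\end{align*}
where $g(n)\sim c\,n^{\alpha}\exp\!\bigl(\tfrac{2\pi}{3\sqrt3}\sqrt{n}\bigr)$ is a positive growing envelope and $\psi$ is an explicit phase coming from the eta-multiplier at $1/3$; this is precisely the asymptotic recorded in Theorems~\ref{th:main-asy} and~\ref{th:main-asy-b}. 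The decisive point is that the computed $\psi$ lies in the range for which $\cos(2\pi n/3-\psi)>0$ at $n\equiv0,2\pmod3$ and $<0$ at $n\equiv1\pmod3$, so the main term already carries the asserted sign pattern.

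To finish, I would make the error term effective, extracting an explicit threshold $N_{0}$ beyond which $|g(n)|$ dominates the error and hence the sign of $D(n)$ equals that of the cosine; this settles all $n\ge N_{0}$. The remaining finitely many $n<N_{0}$ I would check directly from the series $F(q)=(q;q)_{\infty}^{4}/(q^{3};q^{3})_{\infty}^{2}$, using the pentagonal number theorem and Jacobi's identity $(q;q)_{\infty}^{3}=\sum_{j\ge0}(-1)^{j}(2j+1)q^{j(j+1)/2}$, which also exhibits $D(5)=0$ as the unique failure of strict positivity. The main obstacle is the second step: obtaining an \emph{effective} circle-method estimate with a usable error bound, and correctly evaluating the multiplier-system phase $\psi$ at the cusp $1/3$ so that the three equally spaced cosine values come out with signs exactly $(+,-,+)$; pinning $\psi$ into the narrow admissible window is what makes the sign pattern, rather than merely the growth rate, come out correctly.
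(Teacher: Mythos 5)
Your proposal follows essentially the same route as the paper: the specialization $z=e^{2\pi i/3}$ giving $\sum_n\bigl(M^{*}(0,3,n)-M^{*}(1,3,n)\bigr)q^n=(q;q)_\infty^4/(q^3;q^3)_\infty^2$, a Rademacher-type circle method in which the arcs with denominator divisible by $3$ (in particular $k=3$) supply the main term while the other arcs decay, an explicit sign-bearing coefficient depending on $n\bmod 3$ (your $\cos(2\pi n/3-\psi)$ with $\psi=-2\pi/9$ is exactly the paper's $c(n)$ up to the factor $4\pi/3$), an effective error bound, and a finite check of small $n$ including the exception $D(5)=0$. The only difference is that the paper carries out the multiplier/Dedekind-sum computation and the explicit error estimates (threshold $n\ge 114$) that your plan defers.
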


This result is a consequence of an asymptotic formula for $M^{*}(0,3,n)-M^{*}(1,3,n)$. Before stating this formula, we recall that the modified Bessel functions of the first kind is defined as
\begin{align}
I_{s}(x)=\sum_{m=0}^{\infty}\frac{1}{m!\Gamma(m+s+1)}\left(\frac{x}{2}\right)^{2m+s},
\end{align}
where $\Gamma(z)$ is the gamma function. It is known \cite[p.~377, (9.7.1)]{handbook} that, for fixed $s$, when $|\arg x|<\frac{\pi}{2}$,
\begin{align}\label{Bessel-order}
I_{s}(x)\thicksim \frac{e^x}{\sqrt{2\pi x}}\left(1-\frac{4s^2-1}{8x}+\frac{(4s^2-1)(4s^2-9)}{2!(8x)^2}-\cdots \right).
\end{align}
\begin{theorem}\label{th:main-asy}
For $n\ge 1$,
\begin{equation}\label{eq:main-asy-a-1}
M^{*}(0,3,n)-M^{*}(1,3,n)=c(n) \; I_0\left(\frac{2\pi \sqrt{n-\frac{1}{12}}}{3\sqrt{3}}\right)+E(n),
\end{equation}
where
\begin{align*}
c(n)=\begin{cases}
\frac{4\pi}{3} \cos\frac{2\pi}{9} & \emph{if}~n\equiv 0 \pmod{3},\\
-\frac{4\pi}{3} \cos\frac{\pi}{9} & \emph{if}~n\equiv 1 \pmod{3},\\
\frac{4\pi}{3} \sin\frac{\pi}{18} & \emph{if}~n\equiv 2 \pmod{3},
\end{cases}
\end{align*}
and
\begin{align*}
|E(n)|&\le 173.1\; \sqrt{n-\frac{1}{12}}\; \left(\log \sqrt{2\pi \left(n-\frac{1}{12}\right)}+1\right) + 74.3\; \sqrt{n-\frac{1}{12}}\nonumber\\
&\quad+ 2.8\; \sqrt{n-\frac{1}{12}} \; \exp\left({\dfrac{\pi \sqrt{n-\frac{1}{12}}}{3\sqrt{3}}}\right).
\end{align*}
\end{theorem}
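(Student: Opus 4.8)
The plan is to reduce the difference to the coefficients of an explicit eta-quotient and then to attack those coefficients by the circle method. Setting $z=\zeta$ with $\zeta=e^{2\pi i/3}$ in \eqref{bicrank:gf} and using $\zeta^{-1}=\zeta^{2}$, the four factors in the denominator collapse to $(\zeta q;q)_{\infty}^{2}(\zeta^{2}q;q)_{\infty}^{2}$. From $(1-X)(1-\zeta X)(1-\zeta^{2}X)=1-X^{3}$ one gets $(\zeta q;q)_{\infty}(\zeta^{2}q;q)_{\infty}=(q^{3};q^{3})_{\infty}/(q;q)_{\infty}$, so the right-hand side of \eqref{bicrank:gf} becomes $(q;q)_{\infty}^{4}/(q^{3};q^{3})_{\infty}^{2}$. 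On the left-hand side, the invariance of \eqref{bicrank:gf} under $z\mapsto z^{-1}$ gives $M^{*}(m,n)=M^{*}(-m,n)$, hence $M^{*}(1,3,n)=M^{*}(2,3,n)$; since $\zeta+\zeta^{2}=-1$, the coefficient of $q^{n}$ on the left is exactly $M^{*}(0,3,n)-M^{*}(1,3,n)$. Writing $A(q):=(q;q)_{\infty}^{4}/(q^{3};q^{3})_{\infty}^{2}=\sum_{n}a(n)q^{n}$, it remains to find the asymptotics of $a(n)=M^{*}(0,3,n)-M^{*}(1,3,n)$.

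Next I would represent $a(n)=\frac{1}{2\pi i}\oint A(q)\,q^{-n-1}\,dq$ over a circle just inside $|q|=1$ and carry out a Farey (Ford-circle) dissection, collecting the contribution of each rational $h/k$. With $q=e^{2\pi i\tau}$ one has $A(q)=q^{1/12}\,\eta(\tau)^{4}/\eta(3\tau)^{2}$, a weight-one eta-quotient, so the size of $A$ near a cusp $h/k$ is controlled by the modular transformation of $\eta$, for which the essential arithmetic input is $\gcd(3,k)$. A direct computation of the resulting exponentials shows that $A$ decays at every cusp with $3\nmid k$, that among the cusps with $3\mid k$ the growth is strongest for $k=3$, and that at the two cusps $h/3$ ($h=1,2$) one has $A\sim\exp\left(\frac{\pi}{54\,\delta}\right)$ as $\tau\to h/3$ with $\tau-h/3=i\delta$. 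Hence the cusps of denominator $3$ are dominant.

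Applying the $\eta$-transformation at $h/3$ then produces, for each $h\in\{1,2\}$, a Rademacher-type arc integral whose leading evaluation is a modified Bessel function. Because the weight equals $1$, the relevant index is $I_{1-1}=I_{0}$, and the standard saddle-point balance of $\frac{\pi/54}{\delta}+2\pi\left(n-\frac{1}{12}\right)\delta$ (the shift by $-\frac{1}{12}$ coming from the factor $q^{1/12}$, equivalently from the order $-\frac{1}{12}$ of the pole of $\eta^{4}/\eta(3\tau)^{2}$ at these cusps) reproduces the argument $\frac{2\pi}{3\sqrt{3}}\sqrt{n-\frac{1}{12}}$. Summing the two conjugate contributions $h=1,2$, the Dedekind-eta multiplier (a root of unity whose argument is a multiple of $\pi/18$) combines with the phase $e^{-2\pi i n h/3}$ to give a real number depending only on $n\bmod 3$; this produces the coefficient $c(n)$ with the three stated values $\frac{4\pi}{3}\cos\frac{2\pi}{9}$, $-\frac{4\pi}{3}\cos\frac{\pi}{9}$ and $\frac{4\pi}{3}\sin\frac{\pi}{18}$.

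All remaining contributions are absorbed into $E(n)$: the cusps with $3\nmid k$, the subdominant cusps $k=6,9,\dots$, the higher terms of the Bessel expansion \eqref{Bessel-order}, and the error in replacing each finite arc integral by a complete Bessel contour. The largest of these is governed by the next growing cusp $k=6$, whose Bessel argument is $\frac{2\pi}{6\sqrt{3}}\sqrt{n-\frac{1}{12}}=\frac{\pi}{3\sqrt{3}}\sqrt{n-\frac{1}{12}}$; this explains the exponential factor appearing in the final line of the bound for $|E(n)|$. The main obstacle is not any single conceptual step but rather making all of these estimates fully explicit so as to obtain the stated constants $173.1$, $74.3$ and $2.8$. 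This requires uniform bounds on $\eta^{4}/\eta(3\tau)^{2}$ away from its singularities at every cusp, an explicit majorization of the sums over $h$ (used in place of any nontrivial Kloosterman cancellation), and careful control of both the Bessel-tail and the arc-completion errors; here I would follow and refine the explicit computations of Chan \cite{chan2005}.
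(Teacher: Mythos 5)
Your proposal follows essentially the same route as the paper: the specialization $z=\zeta_3$ reducing the problem to the coefficients of $(q;q)_\infty^4/(q^3;q^3)_\infty^2$, a Farey dissection in which the arcs with $3\mid k$ dominate, the $k=3$ cusps yielding the $I_0$ main term with $c(n)$ coming from the Dedekind sums combined with the phase $\ee{-nh/3}$, and the $k=6$ cusps accounting for the exponential factor in $E(n)$, all made explicit by adapting Chan's estimates. The only difference is presentational (you speak of eta-quotients at cusps where the paper works directly with $F(\ee{\tau})=1/(q;q)_\infty$ and its transformation formula), so your outline is correct and matches the paper's proof.
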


We also have
\begin{theorem}\label{THM:mod 4}
For all $n\geq0$,
\begin{align}
M^{*}(0,4,n)&>M^{*}(2,4,n)\quad\quad  \emph{if}~n\equiv 0,2,3,7 \pmod{8},\label{mod 4:pos}\\
M^{*}(0,4,n)&<M^{*}(2,4,n)\quad\quad \emph{if}~n\equiv 1,4,5,6 \pmod{8},\label{mod 4:neg}
\end{align}
except for $n=4$ and $20$.
\end{theorem}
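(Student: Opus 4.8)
The plan is to mirror the structure used for Theorem~\ref{THM:mod 3}: derive an asymptotic formula for the difference $M^{*}(0,4,n)-M^{*}(2,4,n)$ whose dominant term is an explicit oscillating coefficient times a modified Bessel function $I_0$, and then show that the error term $E(n)$ is eventually dominated by the main term, so that the sign of the difference is governed by the sign of the coefficient $c(n)$ for all but finitely many $n$. The starting point is to specialize the bicrank generating function~\eqref{bicrank:gf}. To isolate the residue classes $r=0$ and $r=2$ modulo~$4$, I would form the appropriate combination of roots of unity: setting
\begin{align*}
\sum_{n=0}^{\infty}\bigl(M^{*}(0,4,n)-M^{*}(2,4,n)\bigr)q^{n}=\tfrac{1}{2}\left(F(i,q)+F(-i,q)\right),
\end{align*}
where $F(z,q)$ denotes the right-hand side of~\eqref{bicrank:gf} and I have used $M^{*}(0,4,n)-M^{*}(2,4,n)=\sum_{m}(-1)^{m/2}[2\mid m]M^{*}(m,n)$, equivalently extracting the even-index coefficients with the sign $i^{m}$. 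Substituting $z=\pm i$ into~\eqref{bicrank:gf} and simplifying the resulting infinite products (noting $z^{2}=-1$, so the factors $(z^{2}q,z^{-2}q;q)_\infty$ collapse to $(-q;q)_\infty^{2}$ and the factors $(zq,z^{-1}q;q)_\infty=(iq,-iq;q)_\infty=(-q^{2};q^{2})_\infty$) should yield a clean eta-quotient.

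**Next I would** feed this eta-quotient into the circle method machinery that the paper develops in Section~\ref{sec:circle}. The generating function, after the above simplification, is a modular-type infinite product, and its behavior near each root of unity $e(h/k)$ on the unit circle determines the asymptotic contribution. Following the standard Hardy--Ramanujan--Rademacher analysis, each rational point contributes a term of the shape (constant depending on $h,k$) times a Bessel function $I_0\bigl(\tfrac{2\pi}{\lambda}\sqrt{n-\delta}\bigr)$ for appropriate $\lambda$ and $\delta$ read off from the order of vanishing of the eta-quotient at that cusp. The dominant singularity will fix the argument $\sqrt{n-\tfrac1{12}}$ (or the analogous shift for the mod-$4$ case) and an explicit denominator playing the role of $3\sqrt{3}$; the coefficient $c(n)$ arises as a sum over the principal contributing $k$ of Kloosterman-type exponential sums, which reduce to the trigonometric expressions (cosines and sines of rational multiples of $\pi$) that depend on $n\bmod 8$—explaining the eight residue classes in the statement. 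I would carefully track these root-of-unity sums to establish the explicit $c(n)$ and verify that $c(n)>0$ exactly on $n\equiv 0,2,3,7\pmod 8$ and $c(n)<0$ on the complementary classes.

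**The error term** would be bounded by exactly the same scheme used in Theorem~\ref{th:main-asy}: the non-principal arcs and the tail of the Bessel expansions contribute terms that are $O\bigl(\sqrt{n}\,(\log\sqrt n+1)\bigr)$ together with one term of the form $O\bigl(\sqrt{n}\,\exp(\tfrac{\pi}{\lambda}\sqrt{n-\delta})\bigr)$ coming from the secondary singularity, where the exponential rate is strictly smaller than that of the main term. Since the main term grows like $\exp\bigl(\tfrac{2\pi}{\lambda}\sqrt{n-\delta}\bigr)/\sqrt{n}$ while every error contribution grows at most like $\exp\bigl(\tfrac{\pi}{\lambda}\sqrt{n-\delta}\bigr)$ times a polynomial factor, the ratio $|E(n)|/|c(n)I_0(\cdots)|\to 0$, so for $n$ beyond an explicit threshold the sign of the difference equals the sign of $c(n)$, yielding~\eqref{mod 4:pos} and~\eqref{mod 4:neg}. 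The finitely many small $n$ below the threshold, together with the two genuine exceptions $n=4$ and $n=20$, are then settled by direct computation of the bicrank counts from the generating function.

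**The hard part will be** pinning down the explicit numerical constants in the error bound and, consequently, determining the threshold $N_0$ beyond which the asymptotic controls the sign; this requires effectivizing every $O$-estimate in the circle method (as was done for Theorem~\ref{th:main-asy}, where constants such as $173.1$ and $74.3$ appear) rather than merely proving an asymptotic. A secondary difficulty is that the mod-$4$ case splits into eight residue classes rather than three, so the root-of-unity sum producing $c(n)$ is more delicate and one must check that $c(n)$ is bounded away from zero in absolute value across all eight classes—otherwise a class with anomalously small $|c(n)|$ would force a larger threshold and potentially additional exceptions. I anticipate that the exceptions $n=4,20$ arise precisely because the main term, though of the correct sign, is numerically overtaken by the error contribution at these small values, so confirming that no further exceptions occur just above $n=20$ is the crux of the finite verification.
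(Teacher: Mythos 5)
Your proposal follows essentially the same route as the paper: specialize \eqref{bicrank:gf} at $z=i$ (averaging over $z=\pm i$ is redundant by the $z\mapsto z^{-1}$ symmetry of the product) to obtain the eta-quotient $(q;q)_\infty^4/\bigl((q^2;q^2)_\infty(q^4;q^4)_\infty\bigr)$, run Chan's effective circle method with explicit constants, and dispose of small $n$ and the exceptions $n=4,20$ by direct computation. The one feature your sketch glosses over is that the main term is a sum of \emph{two} Bessel contributions, $c_1(n)\,I_0\bigl(\pi\sqrt{n-1/12}/(2\sqrt3)\bigr)$ from $k=4$ and $c_2(n)\,I_0\bigl(\pi\sqrt{n-1/12}/(4\sqrt3)\bigr)$ from $k=8$, with $c_1(n)=0$ for even $n$; so in the even residue classes the dominant exponential rate is halved and the separation from the error term's rate $\pi\sqrt{n-1/12}/(6\sqrt3)$ is only a factor of $3/2$ rather than $2$, which is why the threshold ($n\ge 2160$) is much larger than in the mod-$3$ case --- but your ``sum over the principal contributing $k$'' framework would surface this in execution.
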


Again, this is a consequence of the following asymptotic formula.

\begin{theorem}\label{th:main-asy-b}
For $n\ge 1$,
\begin{align}
&M^{*}(0,4,n)-M^{*}(2,4,n)\nonumber\\
&\qquad=c_{1}(n) \; I_0\left(\frac{\pi \sqrt{n-\frac{1}{12}}}{2\sqrt{3}}\right)+c_{2}(n) \; I_0\left(\frac{\pi \sqrt{n-\frac{1}{12}}}{4\sqrt{3}}\right)+E(n),\label{eq:main-asy-b-1}
\end{align}
where
\begin{align*}
c_{1}(n)=\begin{cases}
-\pi & \emph{if}~n\equiv 1 \pmod{4},\\
\pi & \emph{if}~n\equiv 3 \pmod{4},\\
0 & \emph{otherwise},
\end{cases}
\end{align*}
\begin{align*}
c_{2}(n)=\begin{cases}
\pi \sin\frac{\pi}{8} & \emph{if}~n\equiv 0 \pmod{8},\\
\pi \cos\frac{\pi}{8} & \emph{if}~n\equiv 2 \pmod{8},\\
-\pi \sin\frac{\pi}{8} & \emph{if}~n\equiv 4 \pmod{8},\\
-\pi \cos\frac{\pi}{8} & \emph{if}~n\equiv 6 \pmod{8},\\
0 & \emph{otherwise},
\end{cases}
\end{align*}
and
\begin{align*}
|E(n)|&\le 224.2\; \sqrt{n-\frac{1}{12}}\; \left(\log \sqrt{2\pi \left(n-\frac{1}{12}\right)}+1\right) + 55.6\; \sqrt{n-\frac{1}{12}}\nonumber\\
&\quad+ 2.4\; \sqrt{n-\frac{1}{12}} \; \exp\left({\dfrac{\pi \sqrt{n-\frac{1}{12}}}{6\sqrt{3}}}\right).
\end{align*}
\end{theorem}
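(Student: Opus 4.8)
The plan is to first collapse the difference $M^{*}(0,4,n)-M^{*}(2,4,n)$ into the coefficients of a single eta-quotient, and then feed that eta-quotient into the Hardy--Ramanujan--Rademacher machinery set up in Section~\ref{sec:circle}. For the reduction, observe that selecting the residues $m\equiv 0$ and $m\equiv 2\pmod 4$ with signs $+1$ and $-1$ is exactly weighting $z^{m}$ in \eqref{bicrank:gf} by $\tfrac12\bigl(i^{m}+(-i)^{m}\bigr)=\cos\tfrac{\pi m}{2}$. Writing $F(z;q)$ for the right-hand side of \eqref{bicrank:gf} and noting that the substitutions $z=i$ and $z=-i$ merely permute the four factors $z^{\pm1}q,z^{\pm2}q$ (so the two specializations coincide), a short simplification using $(iq;q)_{\infty}(-iq;q)_{\infty}=(-q^{2};q^{2})_{\infty}$ and $(-q;q)_{\infty}=(q^{2};q^{2})_{\infty}/(q;q)_{\infty}$ gives
\begin{align*}
\sum_{n=0}^{\infty}\bigl(M^{*}(0,4,n)-M^{*}(2,4,n)\bigr)q^{n}
&=\tfrac12\bigl(F(i;q)+F(-i;q)\bigr)\\
&=\frac{(q;q)_{\infty}^{4}}{(q^{2};q^{2})_{\infty}(q^{4};q^{4})_{\infty}}.
\end{align*}
This runs exactly parallel to the mod-$3$ reduction behind Theorem~\ref{th:main-asy}, where specializing at $z=e^{2\pi i/3}$ produces $(q;q)_{\infty}^{4}/(q^{3};q^{3})_{\infty}^{2}$.

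With the eta-quotient in hand I would apply the circle method precisely as in the mod-$3$ case. Writing the series as $\prod_{\delta\in\{1,2,4\}}(q^{\delta};q^{\delta})_{\infty}^{r_{\delta}}$ with $(r_{1},r_{2},r_{4})=(4,-1,-1)$, the order of growth at a root of unity $e^{2\pi i h/k}$ is controlled by
\begin{align*}
\beta_{k}=\sum_{\delta}\frac{r_{\delta}\gcd(\delta,k)^{2}}{\delta}
=4-\frac{\gcd(2,k)^{2}}{2}-\frac{\gcd(4,k)^{2}}{4},
\end{align*}
the arc at $h/k$ contributing genuine growth precisely when $\beta_{k}<0$. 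A direct case check ($k$ odd, $k\equiv 2$, and $4\mid k$) shows $\beta_{k}<0$ if and only if $4\mid k$, in which case $\beta_{k}=-2$. Since the Bessel argument attached to a growing cusp $h/k$ is $\tfrac{2\pi}{k}\sqrt{\tfrac{|\beta_{k}|}{6}\bigl(n-\tfrac{1}{12}\bigr)}$, with shift $\tfrac{1}{12}=\tfrac{1}{24}\sum_{\delta}r_{\delta}$, the two largest contributions come from $k=4$ and $k=8$, yielding exactly the terms $I_{0}\bigl(\tfrac{\pi}{2\sqrt3}\sqrt{n-\tfrac1{12}}\bigr)$ and $I_{0}\bigl(\tfrac{\pi}{4\sqrt3}\sqrt{n-\tfrac1{12}}\bigr)$ of \eqref{eq:main-asy-b-1}; the remaining growing cusps $k=12,16,\dots$ are of strictly smaller order and are relegated to $E(n)$.

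To pin down $c_{1}(n)$ and $c_{2}(n)$ I would sum the contributions of the admissible numerators, namely $h\in\{1,3\}$ for $k=4$ and $h\in\{1,3,5,7\}$ for $k=8$. Each carries the phase $e(-nh/k)$ together with the root of unity coming from the eta transformation formula (a Dedekind-sum exponential). Evaluating these finite exponential sums and collecting real parts produces the recorded piecewise constants: $c_{1}(n)=-\pi$ for $n\equiv 1$, $c_{1}(n)=\pi$ for $n\equiv 3\pmod4$, and $0$ for even $n$; while $c_{2}(n)\in\{\pm\pi\sin\tfrac{\pi}{8},\pm\pi\cos\tfrac{\pi}{8}\}$ depending on $n\bmod 8$. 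The appearance of $\sin\tfrac{\pi}{8}$ and $\cos\tfrac{\pi}{8}$ is the footprint of the eighth-root-of-unity phases at $k=8$, just as $\cos\tfrac{2\pi}{9}$, $\cos\tfrac{\pi}{9}$, $\sin\tfrac{\pi}{18}$ arise at $k=3$ in Theorem~\ref{th:main-asy}.

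The main obstacle is the effective error bound for $E(n)$ with the explicit constants $224.2$, $55.6$, $2.4$. This demands two quantitative estimates: bounding the entire tail over the non-dominant arcs---both the growing cusps $4\mid k$ with $k\ge 12$ (each an $I_{0}$ of strictly smaller argument, summed against a convergent Kloosterman-type series) and the decaying cusps with $\beta_{k}>0$---and controlling the error when each arc integral is replaced by its Bessel main term. Here I would follow the quantitative modification of Chan's argument \cite{chan2005} promised in Section~\ref{sec:circle}, tracking every constant through the eta transformation, the asymptotic \eqref{Bessel-order} for $I_{0}$, and the standard length bound on the Farey arcs. The factor $\log\sqrt{2\pi(n-\tfrac1{12})}$ is the signature of summing the subdominant $k$ up to the Farey order $\asymp\sqrt n$, and the residual exponential $\exp\bigl(\tfrac{\pi}{6\sqrt3}\sqrt{n-\tfrac1{12}}\bigr)$ records the largest discarded growing cusp $k=12$. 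Once $E(n)$ is dominated by the gap between the $k=4$ main term and everything else, Theorem~\ref{THM:mod 4} follows by comparing $c_{1}(n)I_{0}(\cdots)$ against the remaining terms for all $n$ beyond an explicit threshold, the exceptional values $n=4$ and $n=20$ being verified directly.
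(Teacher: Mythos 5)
Your proposal is correct and follows essentially the same route as the paper: the same specialization $z=i$ of \eqref{bicrank:gf} yielding $(q;q)_\infty^4/\bigl((q^2;q^2)_\infty(q^4;q^4)_\infty\bigr)$, the same Farey dissection with the dominant arcs exactly at $4\mid k$, the main terms extracted from $k=4$ and $k=8$ via Hankel-contour Bessel integrals and Dedekind-sum phases, and the $k=12$ cusp governing the residual exponential in $E(n)$. Your $\beta_k$ criterion is just a cleaner packaging of the case analysis the paper carries out explicitly through the transformation formula \eqref{eq:F-m-gamma}, so the two arguments coincide in substance.
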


\section{The circle method}\label{sec:circle}

One powerful tool to obtain the asymptotics in Theorems \ref{th:main-asy} and \ref{th:main-asy-b} is the circle method. Here we shall use Chan's version in \cite{chan2005}, whose approach follows from Rademacher (cf.~\cite[Chap.~5]{Andr1976}) and a later modification due to Kane \cite{Kan2004}.

For convenience, we adopt Chan's notation in \cite{chan2005}. We also use the conventional notation $\ee{x}:=e^{2\pi i x}$ for complex $x$.

\subsection{Cauchy's integral formula and Farey arcs}

Let $f(x)=\sum_{n=0}^\infty a(n) x^n$ be a holomorphic function on a simply connected domain containing the origin. Cauchy's integral formula tells us
\begin{equation*}
a(n)=\frac{1}{2 \pi i} \oint_{|x|=r} \frac{f(x)}{x^{n+1}}\ dx,
\end{equation*}
where the contour integral is taken counter-clockwise. We set $r=e^{-2\pi /N^2}=e^{-2 \pi \varrho}$ with $N$ to be determined.

We next dissect the circle by Farey arcs. It follows that
\begin{equation*}
a(n)=\sum_{1\le k\le N} \sum_{\substack{0\le h\le k\\ (h,k)=1}} \ee{-\frac{nh}{k}} \int_{\xi_{h,k}} f\big(\ee{h/k+i \varrho +\phi}\big)\ee{-n \phi} e^{2 \pi n \varrho}\ d\phi,
\end{equation*}
where $h/k$ is a Farey fraction of order $N$ and $\xi_{h,k}$ denotes the interval $[-\theta'_{h,k},\theta''_{h,k}]$ with $-\theta'_{h,k}$ and $\theta''_{h,k}$ being the positive distances from $h/k$ to its neighboring mediants.

At last, making the following changes of variables $z=k(\varrho -i \phi)$ and $\tau = (h+i z)/k$ yields
\begin{equation}\label{eq:cauchy-var}
a(n)=\sum_{1\le k\le N} \sum_{\substack{0\le h\le k\\ (h,k)=1}} \ee{-\frac{nh}{k}} \int_{\xi_{h,k}} f\big(\ee{\tau}\big)\ee{-n \phi} e^{2 \pi n \varrho}\ d\phi.
\end{equation}

\subsection{Functional equations}

Let $q:=\ee{\tau}$ with $\tau\in\mathbb{H}$, the upper half complex plane. For any $\gamma\in SL_2(\mathbb{Z})$, we define the fractional linear transformation
$$\gamma(\tau):=\frac{a\tau +b}{c\tau + d}, \quad \text{with }\gamma=\begin{pmatrix}a & b\\c& d\end{pmatrix}.$$
We also write
$$F(q)=F\big(\ee{\tau}\big):=\frac{1}{\f{1}}.$$

The functional equation for Dedekind eta-function (cf.~\cite[pp.~52--61]{Apo1990}) tells us that when $c>0$
\begin{equation}\label{eq:F-gamma}
F\big(\ee{\tau}\big)=\ee{\frac{\tau-\gamma(\tau)}{24}-\frac{s(d,c)}{2}+\frac{a+d}{24c}} \sqrt{-i (c\tau +d)}\ F\big(\ee{\gamma(\tau)}\big),
\end{equation}
where $s(d,c)$ is the Dedekind sum defined by
\begin{equation*}
s(d,c):=\sum_{n \bmod{c}} \bigg(\bigg(\frac{dn}{c}\bigg)\bigg)\bigg(\bigg(\frac{n}{c}\bigg)\bigg)
\end{equation*}
with
$$((x)):=\begin{cases}
x-\lfloor x\rfloor -1/2 & \text{if $x\not\in \mathbb{Z}$},\\
0 & \text{if $x\in \mathbb{Z}$},
\end{cases}$$
and the square root is taken on the principal branch, with $z^{1/2}>0$ for $z>0$.

Recall that $\gcd(h,k)=1$. For $m$ such that $\gcd(m,k)=1$, we put $h_m=mh$ and choose an integer $h'_m$ such that $h'_m h_m \equiv -1 \pmod{k}$ with $b_m=(h'_m h_m+1)/k$. Let
$$\gamma_{(m,k)}=\begin{pmatrix}h'_m & -b_m\\k & -h_m\end{pmatrix}.$$
It is easy to check that $\gamma_{(m,k)}\in SL_2(\mathbb{Z})$.

Chan \cite{chan2005} obtained the functional equation between $F(e(\tau))$ and $F(e(\gamma_{(m,k)}(\tau)))$ provided $\gcd(m,k)=1$. However, it will make our subsequent arguements more transparent if we slightly modify Chan's transformation formula for general $m$.

\begin{lemma}[Modification of {\cite[(3.13)]{chan2005}}]
Suppose that $\gcd(m,k)=d$ with $m=d m'$ and $k= d k'$. We have
\begin{equation}\label{eq:gamma}
\gamma_{(m',k')}(m\tau)=\frac{h'_{m'}}{k'}+\frac{1}{m'k'z}i
\end{equation}
and
\begin{equation}\label{eq:F-m-gamma}
F\big(\ee{m\tau}\big)=e^{\frac{\pi}{12 k'}\left(\frac{1}{m'z}-m'z\right)} \ee{\frac{s(m'h,k')}{2}} \sqrt{m' z}\ F\big(\ee{\gamma_{(m',k')}(m\tau)}\big).
\end{equation}
\end{lemma}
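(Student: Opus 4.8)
The plan is to obtain both identities by a single direct computation: first establish \eqref{eq:gamma}, and then feed it into the Dedekind-eta transformation \eqref{eq:F-gamma}. The conceptual point behind the ``modification'' is that although Chan's formula in \cite{chan2005} assumes $\gcd(m,k)=1$, writing $m=dm'$ and $k=dk'$ collapses the ratio $m/k$ to $m'/k'$ with $\gcd(m',k')=1$, so the general case reduces to the coprime matrix $\gamma_{(m',k')}$ applied at the point $m\tau$.

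First I would rewrite $m\tau$ in a shape suited to $\gamma_{(m',k')}$. Substituting $\tau=(h+iz)/k$ and cancelling $d$ gives
\begin{equation*}
m\tau=\frac{m(h+iz)}{k}=\frac{m'h+im'z}{k'}.
\end{equation*}
With $\gamma_{(m',k')}=\begin{pmatrix}h'_{m'} & -b_{m'}\\ k' & -h_{m'}\end{pmatrix}$ and $h_{m'}=m'h$, I would then compute $\gamma_{(m',k')}(m\tau)=\dfrac{h'_{m'}(m\tau)-b_{m'}}{k'(m\tau)-h_{m'}}$. The denominator collapses neatly, $k'(m\tau)-m'h=im'z$, while the numerator, after inserting $b_{m'}=(h'_{m'}h_{m'}+1)/k'=(h'_{m'}m'h+1)/k'$, becomes $(-1+ih'_{m'}m'z)/k'$. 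Dividing the two and using $1/i=-i$ yields precisely \eqref{eq:gamma}.

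For the functional equation \eqref{eq:F-m-gamma}, I would invoke \eqref{eq:F-gamma} with $\tau$ replaced by $m\tau$ and $\gamma=\gamma_{(m',k')}$, so that $(a,b,c,d)=(h'_{m'},-b_{m'},k',-m'h)$ and in particular $c=k'>0$, as \eqref{eq:F-gamma} requires. Three pieces must then be matched. The computation above already shows $c(m\tau)+d=im'z$, whence $\sqrt{-i(c(m\tau)+d)}=\sqrt{m'z}$. Next, the Dedekind-sum contribution is $-\tfrac12 s(d,c)=-\tfrac12 s(-m'h,k')$, which equals $\tfrac12 s(m'h,k')$ by the oddness $s(-x,c)=-s(x,c)$, producing the factor $\ee{s(m'h,k')/2}$. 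Finally, the leftover exponent $\frac{m\tau-\gamma_{(m',k')}(m\tau)}{24}+\frac{a+d}{24c}$ has to be evaluated: substituting \eqref{eq:gamma} shows that its rational (real) part is $\frac{m'h-h'_{m'}}{24k'}+\frac{h'_{m'}-m'h}{24k'}=0$, so only the imaginary part $\frac{i}{24}\bigl(\frac{m'z}{k'}-\frac{1}{m'k'z}\bigr)$ survives; feeding this into $\ee{\cdot}=e^{2\pi i(\cdot)}$ turns it into $\exp\bigl(\frac{\pi}{12k'}(\frac{1}{m'z}-m'z)\bigr)$. Collecting the three factors gives the claimed formula.

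The computations are elementary, so there is no deep obstacle; the points demanding care are the sign of the Dedekind sum (the oddness $s(-x,c)=-s(x,c)$ is exactly what flips $-s(d,c)/2$ into $+s(m'h,k')/2$) and the cancellation of the rational parts in the last step, which is what guarantees that the surviving prefactor is a clean real exponential rather than carrying a spurious root-of-unity phase. I would also record at the outset that $\gcd(m',k')=1$, so that $h'_{m'}$ and the associated $b_{m'}$ entering $\gamma_{(m',k')}$ are well defined and $\gamma_{(m',k')}\in SL_2(\mathbb{Z})$.
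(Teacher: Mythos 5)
Your proposal is correct and follows essentially the same route as the paper: a direct computation of $\gamma_{(m',k')}(m\tau)$ from $\tau=(h+iz)/(dk')$, followed by specializing \eqref{eq:F-gamma} at $m\tau$ with $\gamma=\gamma_{(m',k')}$. You simply spell out the details the paper leaves implicit (the identification $c(m\tau)+d=im'z$, the oddness $s(-d,c)=-s(d,c)$, and the cancellation of the rational parts of the exponent), all of which check out.
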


\begin{proof}
Recall that
$$\tau=\frac{h+iz}{k}=\frac{h+iz}{d k'}.$$
Hence noticing that $m/d=m'$ and $m'h=h_{m'}$, we have
\begin{align*}
\gamma_{(m',k')}(m\tau)&=\dfrac{h'_{m'}\cdot m\frac{h+iz}{d k'}-b_{m'}}{k' \cdot m\frac{h+iz}{d k'}-h_{m'}}=\frac{h'_{m'} h_{m'}+h'_{m'}(im'z)-(h'_{m'}h_{m'}+1)}{h_{m'} k'+k'(im'z)-h_{m'}k'}\\
&=\frac{h'_{m'}}{k'}+\frac{1}{m'k'z}i.
\end{align*}
Furthermore, we obtain \eqref{eq:F-m-gamma} by taking $\tau\mapsto m\tau$ and $\gamma\mapsto \gamma_{(m',k')}$ in \eqref{eq:F-gamma}.
\end{proof}

\subsection{Necessary bounds}

Chan \cite{chan2005} also obtained most necessary bounds.

Let $|\xi_{h,k}|$ denote the length of the interval $\xi_{h,k}$. It follows from \cite[Eq.~(3.14)]{chan2005} that
\begin{equation}\label{eq:xi-bound}
\frac{1}{kN}\le |\xi_{h,k}| \le \frac{2}{kN}.
\end{equation}

Noting that $z=k(\varrho -i \phi)$, hence
\begin{equation}\label{eq:Re-z-bound}
\Re(z)=\frac{k}{N^2}.
\end{equation}
This implies that
\begin{equation}\label{eq:z-bound}
|z|\ge \frac{k}{N^2}.
\end{equation}
Chan also proved that (cf.~\cite[Eq.~(3.16)]{chan2005})
\begin{align}\label{eq:Re-1-z-bound}
\Re\left(\frac{1}{z}\right)\ge \frac{k}{2}.
\end{align}

We show explicitly

\begin{lemma}
Suppose that $\gcd(m,k)=d$ with $m=d m'$ and $k= d k'$. We have
\begin{align}
\left|F\big(\ee{\gamma_{(m',k')}(m\tau)}\big)\right|&\le \exp\left(\frac{e^{-\pi \gcd(m,k)^2/m}}{\left(1-e^{-\pi \gcd(m,k)^2/m}\right)^2}\right),\label{eq:bound-1}\\
\left|\frac{1}{F\big(\ee{\gamma_{(m',k')}(m\tau)}\big)}\right|&\le \exp\left(\frac{e^{-\pi \gcd(m,k)^2/m}}{\left(1-e^{-\pi \gcd(m,k)^2/m}\right)^2}\right).\label{eq:bound-2}
\end{align}
\end{lemma}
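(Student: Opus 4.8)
The plan is to reduce both bounds to a single estimate on the modulus of the nome $Q:=\ee{\gamma_{(m',k')}(m\tau)}$, and then to control the infinite product $F(Q)=1/(Q;Q)_\infty$ by a crude term-by-term comparison against $|Q|$. First I would read off the imaginary part of the argument from \eqref{eq:gamma}: since $\gamma_{(m',k')}(m\tau)=\frac{h'_{m'}}{k'}+\frac{1}{m'k'z}i$, a short computation gives $\Im\big(\gamma_{(m',k')}(m\tau)\big)=\frac{1}{m'k'}\Re\!\left(\frac1z\right)$, whence
\[
|Q|=\exp\!\left(-\frac{2\pi}{m'k'}\,\Re\!\left(\frac1z\right)\right).
\]
Applying the lower bound \eqref{eq:Re-1-z-bound}, namely $\Re(1/z)\ge k/2$, together with the arithmetic relations $k=dk'$ and $m'=m/d$ where $d=\gcd(m,k)$, I would obtain
\[
|Q|\le \exp\!\left(-\frac{\pi k}{m'k'}\right)=\exp\!\left(-\frac{\pi d}{m'}\right)=\exp\!\left(-\frac{\pi\,\gcd(m,k)^2}{m}\right)=:x,
\]
and since $d,m\ge 1$ we have $0<x<1$, so all the products below converge.

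For \eqref{eq:bound-1} I would take logarithms of $|F(Q)|=\prod_{n\ge1}|1-Q^n|^{-1}$ and use, for each $n$, the elementary inequalities $|1-Q^n|\ge 1-|Q|^n$ and $-\log(1-u)\le \frac{u}{1-u}$ valid for $0\le u<1$. Combined with $1-|Q|^n\ge 1-|Q|$, this yields
\[
\log|F(Q)|\le \sum_{n\ge1}\frac{|Q|^n}{1-|Q|^n}\le \frac{1}{1-|Q|}\sum_{n\ge1}|Q|^n=\frac{|Q|}{(1-|Q|)^2}.
\]
As $u\mapsto u/(1-u)^2$ is increasing on $[0,1)$ and $|Q|\le x$, this gives precisely $\exp\big(x/(1-x)^2\big)$, the asserted bound. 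For \eqref{eq:bound-2} I would instead estimate $|1/F(Q)|=\prod_{n\ge1}|1-Q^n|\le \prod_{n\ge1}(1+|Q|^n)\le \exp\!\big(\sum_{n\ge1}|Q|^n\big)=\exp\big(|Q|/(1-|Q|)\big)$, and then observe that $\frac{x}{1-x}\le \frac{x}{(1-x)^2}$ for $x\in[0,1)$ to land on the same right-hand side.

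The argument is essentially routine, so the delicate points are bookkeeping rather than genuine obstacles. The one thing that must be handled carefully is the exponent: one has to combine $\Re(1/z)\ge k/2$ with the substitutions $k=dk'$ and $m'=m/d$ so that the $k'$-factors cancel and the clean exponent $\gcd(m,k)^2/m$ emerges, rather than an expression still involving $k'$. A secondary check is that $x<1$, which simultaneously guarantees convergence of the infinite products and the validity of the logarithmic inequality; this is immediate from $\gcd(m,k)^2/m>0$. Finally, it is worth noting that the resulting $x$ is a uniform bound over the whole Farey arc, so no residual dependence on $h$, $k$, or $\phi$ enters the final estimates, which is exactly what the later applications will require.
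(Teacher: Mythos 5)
Your proposal is correct and follows essentially the same route as the paper: compute $\Im\big(\gamma_{(m',k')}(m\tau)\big)=\frac{1}{m'k'}\Re(1/z)$, apply \eqref{eq:Re-1-z-bound} to get the uniform exponent $\gcd(m,k)^2/m$, and bound both $|F(Q)|$ and $|1/F(Q)|$ by $\exp\big(|Q|/(1-|Q|)^2\big)$. The only difference is cosmetic: the paper passes through $F(|q|)$ (using positivity of $p(n)$ and the product comparison $\prod(1+|q|^n)\le\prod(1-|q|^n)^{-1}$) and then quotes Chan's inequality \eqref{eq:Fq-bound}, whereas you re-derive that inequality directly from $-\log(1-u)\le u/(1-u)$.
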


\begin{proof}
We write in this proof $q=\ee{y}$ with $y\in \mathbb{H}$. Recall that $p(n)$ denotes the number of partitions of $n$ with the  convention that $p(0)=1$. It is clear that $p(n)>0$ for all $n\ge 0$.

One the one hand, we have
$$|F(q)|\le \sum_{n=0}^\infty p(n) |q|^n = F(|q|).$$
On the other hand,
\begin{equation*}
\left|\frac{1}{F(q)}\right|=\prod_{k=0}^\infty \left|1-q^k\right|\le \prod_{k=0}^\infty \left(1+|q|^k\right)\le \prod_{k=0}^\infty \frac{1}{1-|q|^k}=F(|q|).
\end{equation*}
Chan showed that (cf.~\cite[Eq.~(3.19)]{chan2005})
\begin{equation}\label{eq:Fq-bound}
F(|q|)\le \exp\left(\frac{e^{-2\pi \Im(y)}}{(1-e^{-2\pi \Im(y)})^2}\right).
\end{equation}

We see from \eqref{eq:gamma} that
$$\Im\left(\gamma_{(m',k')}(m\tau)\right)=\frac{1}{m'k'}\Re\left(\frac{1}{z}\right)\ge \frac{1}{m'k'}\frac{k}{2}=\frac{\gcd(m,k)^2}{2m}.$$
Hence the left-hand sides of \eqref{eq:bound-1} and \eqref{eq:bound-2} are both
$$\le \exp\left(\frac{e^{-2\pi \Im\left(\ee{\gamma_{(m',k')}(m\tau)}\right)}}{\left(1-e^{-2\pi \Im\left(\ee{\gamma_{(m',k')}(m\tau)}\right)}\right)^2}\right)\le \exp\left(\frac{e^{-\pi \gcd(m,k)^2/m}}{\left(1-e^{-\pi \gcd(m,k)^2/m}\right)^2}\right).$$
\end{proof}

We also need the following rough bound.
\begin{lemma}\label{le:rough-bound}
%Let the following be positive integers: $\eta_r$ and $\alpha_r$ with $1\le r\le R$, and $-\eta'_{r'}$ and $\alpha'_{r'}$ with $1\le r'\le R'$.%$\eta_1$, $\ldots$, $\eta_R$, $\eta'_1$, $\ldots$, $\eta'_{R'}$, $\alpha_1$, $\ldots$, $\alpha_R$, and $\alpha'_1$, $\ldots$, $\alpha'_{R'}$
Let $\eta_1,\ldots,\eta_R$ be $R$ integers, all non-zero, and let $\alpha_1,\ldots,\alpha_R$ be $R$ distinct positive integers. Let $y\in\mathbb{H}$. Then
\begin{equation*}
\left|\prod_{r=1}^R F\left(\ee{\alpha_r y}\right)^{\eta_r}-1\right|\le \exp \left(\sum_{r=1}^R \frac{|\eta_r|e^{-2\pi \alpha_r\Im(y)}}{(1-e^{-2\pi \alpha_r\Im(y)})^2}\right)-1.
\end{equation*}
\end{lemma}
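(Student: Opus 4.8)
The plan is to expand the product as a power series in $q=\ee{y}$ and then control its coefficients by a majorant (coefficient-wise absolute value) argument, thereby reducing the whole estimate to the single-variable bound \eqref{eq:Fq-bound} already in hand. I would write $q=\ee{y}$ and $Q_r=\ee{\alpha_r y}=q^{\alpha_r}$, so that $|Q_r|=e^{-2\pi\alpha_r\Im(y)}<1$. Since each $\alpha_r\ge 1$, the factor $F(Q_r)^{\eta_r}=\prod_{n\ge 1}(1-Q_r^n)^{-\eta_r}$ is a power series in $q$ with constant term $1$, and hence so is the full product $P(q):=\prod_{r=1}^R F(Q_r)^{\eta_r}=\sum_{n\ge 0}b_n q^n$, with $b_0=1$. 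Consequently $\left|P(q)-1\right|\le \sum_{n\ge1}|b_n|\,|q|^n$.

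The first main step is a majorant estimate for products. For a power series $A(q)=\sum_n a_n q^n$ let $\overline{A}(x):=\sum_n |a_n| x^n$ denote its majorant. Because each coefficient of a product of power series is a finite sum of products of the individual coefficients, the triangle inequality yields $\overline{P}(x)\le \prod_{r=1}^R \overline{F(Q_r)^{\eta_r}}(x)$ for every $x\ge 0$; in particular $\left|P(q)-1\right|\le \overline{P}(|q|)-1\le \prod_{r=1}^R \overline{F(Q_r)^{\eta_r}}(|q|)-1$.

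The second step is to bound each single factor by $F(|Q_r|)^{|\eta_r|}$. When $\eta_r>0$ the series $F(Q_r)^{\eta_r}$ has nonnegative coefficients, so its majorant evaluated at $|q|$ is exactly $F(|Q_r|)^{\eta_r}$. When $\eta_r<0$ we have $F(Q_r)^{\eta_r}=\prod_{n\ge1}(1-Q_r^n)^{|\eta_r|}$; here the coefficients are no longer sign-definite, so I would first replace each finite factor $(1-Q_r^n)^{|\eta_r|}$ by its majorant $(1+|Q_r|^n)^{|\eta_r|}$ and then use $1+s\le(1-s)^{-1}$ for $0\le s<1$ to obtain $\overline{F(Q_r)^{\eta_r}}(|q|)\le\prod_{n\ge1}(1-|Q_r|^n)^{-|\eta_r|}=F(|Q_r|)^{|\eta_r|}$. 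Thus in either case $\overline{F(Q_r)^{\eta_r}}(|q|)\le F(|Q_r|)^{|\eta_r|}$.

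Combining the two steps gives $\left|P(q)-1\right|\le \prod_{r=1}^R F(|Q_r|)^{|\eta_r|}-1$, after which it remains only to insert \eqref{eq:Fq-bound} with $y$ replaced by $\alpha_r y$, namely $F(|Q_r|)\le \exp\!\big(e^{-2\pi\alpha_r\Im(y)}/(1-e^{-2\pi\alpha_r\Im(y)})^2\big)$; multiplying these across $r$ and collecting the exponents produces precisely the right-hand side of the lemma. The only delicate point is the negative-exponent case in the second step, where one must pass from the alternating polynomials $(1-Q_r^n)^{|\eta_r|}$ to the partition-type product $F(|Q_r|)^{|\eta_r|}$; everything else is bookkeeping. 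I would also remark that distinctness of the $\alpha_r$ is never used in the estimate itself—it matters only for the intended application—so the argument goes through verbatim without it.
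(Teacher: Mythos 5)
Your proof is correct and follows essentially the same route as the paper: both arguments reduce the claim to the coefficientwise domination of $\prod_r F(\ee{\alpha_r y})^{\eta_r}$ by $\prod_r F(|\ee{\alpha_r y}|)^{|\eta_r|}$ and then invoke \eqref{eq:Fq-bound}. The only cosmetic difference is that you justify the domination step by a formal majorant-series manipulation (using $(1+s)\le(1-s)^{-1}$ for the negative exponents), whereas the paper phrases the same fact combinatorially via weighted multi-colored partitions; your closing remark that distinctness of the $\alpha_r$ is not needed is also accurate.
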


\begin{proof}
We begin with the observation that $\prod_{r=1}^R F(q^{\alpha_r})^{\eta_r}$ can be treated as the generating function of a weightd multi-colored partition. More precisely, for $F(q^{\alpha_r})$, the corresponding subpartition is an ordinary partition where we require parts being multiples of $\alpha_r$. For $F(q^{\alpha_r})^{-1}$, the corresponding subpartition is a weighted distinct partition with parts being multiples of $\alpha_r$, where the weight is given by $(-1)^\sharp$ with $\sharp$ counting the number of parts. Hence if we write (with $q=\ee{y}$)
$$\prod_{r=1}^R F\left(\ee{\alpha_r y}\right)^{\eta_r}=\sum_{n=0}^\infty a(n) q^n \quad\text{and}\quad \prod_{r=1}^R F\left(\ee{\alpha_r y}\right)^{|\eta_r|}=\sum_{n=0}^\infty \tilde{a}(n) q^n,$$
then $|a(n)|\le \tilde{a}(n)$ for $n\ge 0$.

Hence
\begin{align*}
\left|\prod_{r=1}^R F\left(\ee{\alpha_r y}\right)^{\eta_r}-1\right|&=\left|\sum_{n=0}^\infty a(n) q^n-1\right|=\left|\sum_{n=1}^\infty a(n) q^n\right|\\
&\le \sum_{n=1}^\infty \tilde{a}(n) |q|^n = \prod_{r=1}^R F\left(\left|q^{\alpha_r}\right|\right)^{|\eta_r|}-1\\
&\le \exp \left(\sum_{r=1}^R \frac{|\eta_r|e^{-2\pi \alpha_r\Im(y)}}{(1-e^{-2\pi \alpha_r\Im(y)})^2}\right)-1,
\end{align*}
where we use \eqref{eq:Fq-bound} in the last inequality.
\end{proof}
Since the modified Bessel functions of the first kind play an important role in our results, we establish the following bounds on $I_{0}(x)$.
\begin{lemma}\label{add-lem-Bessel}
For $x>0$ we have
\begin{align}\label{add-I-upper}
I_{0}(x)<\sqrt{\frac{\pi}{8}}\frac{e^x}{\sqrt{x}}.
\end{align}
For $x\ge 1$ we have
\begin{align}\label{add-I-lower}
I_{0}(x)>\frac{4\sqrt{2}}{5\pi}\frac{e^x}{\sqrt{x}}.
\end{align}
\end{lemma}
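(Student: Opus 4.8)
The plan is to pass from the power-series definition of $I_0$ to the classical integral representation
$$I_0(x)=\frac{1}{\pi}\int_0^\pi e^{x\cos\theta}\,d\theta=\frac{e^x}{\pi}\int_0^\pi e^{-x(1-\cos\theta)}\,d\theta,$$
which may be quoted from \cite{handbook} (and verified against the series by integrating $e^{x\cos\theta}$ term by term, since $\frac{1}{\pi}\int_0^\pi\cos^{2k}\theta\,d\theta=\binom{2k}{k}4^{-k}$). Both inequalities then reduce to pinching $1-\cos\theta=2\sin^2(\theta/2)$ between two quadratics in $\theta$ and comparing the resulting integrals with Gaussian integrals.

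For the upper bound \eqref{add-I-upper}, I would invoke Jordan's inequality $\sin u\ge 2u/\pi$ on $[0,\pi/2]$ with $u=\theta/2$, which gives $1-\cos\theta\ge 2\theta^2/\pi^2$ on $[0,\pi]$. Hence $e^{-x(1-\cos\theta)}\le e^{-2x\theta^2/\pi^2}$, and enlarging the range of integration to $[0,\infty)$ only increases the integral, so that
$$I_0(x)\le \frac{e^x}{\pi}\int_0^\infty e^{-2x\theta^2/\pi^2}\,d\theta=\frac{e^x}{\pi}\cdot\frac{\pi^{3/2}}{2\sqrt{2x}}=\sqrt{\frac{\pi}{8}}\,\frac{e^x}{\sqrt{x}}.$$
The inequality is strict because $1-\cos\theta>2\theta^2/\pi^2$ on $(0,\pi)$, so \eqref{add-I-upper} follows for every $x>0$.

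For the lower bound \eqref{add-I-lower}, I would instead use $\sin u\le u$ to obtain $1-\cos\theta\le \theta^2/2$, whence $e^{-x(1-\cos\theta)}\ge e^{-x\theta^2/2}$ and
$$I_0(x)\ge \frac{e^x}{\pi}\int_0^\pi e^{-x\theta^2/2}\,d\theta=\frac{e^x}{\pi}\left(\sqrt{\frac{\pi}{2x}}-\int_\pi^\infty e^{-x\theta^2/2}\,d\theta\right).$$
The full Gaussian integral contributes the main term $\sqrt{\pi/(2x)}$, and the task is to show the truncation tail is negligible. Using $\theta^2\ge \pi\theta$ for $\theta\ge\pi$ gives $\int_\pi^\infty e^{-x\theta^2/2}\,d\theta\le \frac{2}{\pi x}e^{-\pi^2 x/2}$, so after dividing through by $e^x/(\pi\sqrt{x})$ the claim \eqref{add-I-lower} reduces to
$$\frac{2}{\pi\sqrt{x}}\,e^{-\pi^2 x/2}\le \sqrt{\frac{\pi}{2}}-\frac{4\sqrt{2}}{5}.$$
The right-hand side is a positive absolute constant (approximately $0.1219$), while the left-hand side is decreasing in $x$ and is already bounded by $\frac{2}{\pi}e^{-\pi^2/2}\approx 0.0046$ at $x=1$; hence the inequality holds for all $x\ge 1$.

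I expect the only genuinely delicate point to be the lower bound, where one must certify that the fixed gap $\sqrt{\pi/2}-\frac{4\sqrt{2}}{5}>0$ really dominates the truncation error uniformly for $x\ge 1$. This is precisely where the hypothesis $x\ge 1$ (rather than $x>0$) enters, and although the numerical margin is comfortable it has to be made explicit. The upper bound, by contrast, is a clean one-line consequence of Jordan's inequality and the Gaussian evaluation once the integral representation is in hand.
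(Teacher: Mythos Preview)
Your proposal is correct and follows essentially the same route as the paper: start from the integral representation $I_0(x)=\frac{1}{\pi}\int_0^\pi e^{x\cos\theta}\,d\theta$, rewrite $1-\cos\theta=2\sin^2(\theta/2)$, and pinch $\sin u$ between $2u/\pi$ and $u$ to compare with Gaussian integrals. The only cosmetic difference is in the lower bound: the paper substitutes $t=u\sqrt{2x}$ and bounds $\int_0^{\pi\sqrt{x}/\sqrt{2}}e^{-t^2}\,dt\ge\int_0^{\pi/\sqrt{2}}e^{-t^2}\,dt>4/5$ directly, whereas you bound the complementary tail $\int_\pi^\infty e^{-x\theta^2/2}\,d\theta$ and check it falls below the fixed gap $\sqrt{\pi/2}-4\sqrt{2}/5$; these are two sides of the same estimate.
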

\begin{proof}
We need the following inequality: for $0\le t \le \frac{\pi}{2}$,
\begin{align}\label{sin-ineq}
\frac{2}{\pi}t \le \sin t \leq t.
\end{align}
For $x>0$, by \eqref{sin-ineq} we have
\begin{align*}
I_0(x) &= \frac{1}{\pi} \int_0^\pi e^{x \cos \theta}\ d\theta\\
&= \frac{2 e^x}{\pi} \int_0^{\pi/2} e^{-2 x \sin^2 u}\ d u\\
&\le \frac{2 e^x}{\pi} \int_0^{\pi/2} e^{- \frac{8 x}{\pi^2} u^2}\ du\\
& <\frac{2 e^x}{\pi} \frac{\pi}{\sqrt{8 x}} \int_0^\infty e^{-t^2}\ dt\\
& = \sqrt{\frac{\pi}{8}} \frac{e^x}{\sqrt{x}}.
\end{align*}
Similarly, for $x\ge 1$, by \eqref{sin-ineq} we have
\begin{align*}
I_0(x)&= \frac{2 e^x}{\pi} \int_0^{\pi/2} e^{-2 x \sin^2 u}\ d u\\
&\ge  \frac{2 e^x}{\pi} \int_0^{\pi/2} e^{-2 x  u^2}\ d u\\
&\ge \frac{\sqrt{2}}{\pi}\frac{e^x}{\sqrt{x}}\int_{0}^{\frac{\pi\sqrt{x}}{\sqrt{2}}}e^{-t^2}dt  \\
&\ge \frac{\sqrt{2}}{\pi}\frac{e^x}{\sqrt{x}}\int_{0}^{\frac{\pi}{\sqrt{2}}}e^{-t^2}dt  \\
&>\frac{4\sqrt{2}}{5\pi}\frac{e^x}{\sqrt{x}}. \qedhere
\end{align*}
\end{proof}
\begin{remark}
The inequality \eqref{add-I-upper} will be used many times in our arguments while \eqref{add-I-lower} will not be used. The purpose for giving \eqref{add-I-lower} is to help the readers understand roughly how the main term dominates the error term in Theorems \ref{th:main-asy} and \ref{th:main-asy-b}.
\end{remark}
\section{Asymptotic behavior of $M^{*}(0,3,n)-M^{*}(1,3,n)$}\label{asymptotic-3}

For notational convenience, $u\equiv_m v $ means $u\equiv v \pmod{m}$, and likewise $u\not\equiv_m v $ means $u\not\equiv v \pmod{m}$.

Taking $z=\zeta_{3}=e^{2\pi i/3}$ in \eqref{bicrank:gf} yields
\begin{align}
 &\sum_{n=0}^{\infty}\left(M^{*}(0,3,n)-M^{*}(1,3,n)\right)q^{n}=\dfrac{(q;q)_{\infty}^{2}}{(\zeta_{3}q,\zeta_{3}^{-1}q,\zeta_{3}^{2}q,\zeta_{3}^{-2}q;q)_{\infty}}
 =\dfrac{(q;q)_{\infty}^{4}}{(q^{3};q^{3})_{\infty}^{2}}.\label{eq:gen-mod-3}
\end{align}
For convenience, we write
\begin{equation*}
a(n)=M^{*}(0,3,n)-M^{*}(1,3,n).
\end{equation*}
Let
\begin{equation*}
f\left(\ee{\tau}\right)=\sum_{n=0}^\infty a(n) q^n = \frac{\f{1}^4}{\f{3}^2}=\frac{F\left(\ee{3\tau}\right)^2}{F\left(\ee{\tau}\right)^4}.
\end{equation*}
It follows from \eqref{eq:cauchy-var} that
\begin{align}
a(n)&=\sum_{1\le k\le N} \sum_{\substack{0\le h\le k\\ (h,k)=1}} \ee{-\frac{nh}{k}} \int_{\xi_{h,k}} f\big(\ee{\tau}\big)\ee{-n \phi} e^{2 \pi n \varrho}\ d\phi\nonumber\\
&=\left(\sum_{\substack{1\le k\le N\\k\not\equiv_3 0}}+\sum_{\substack{1\le k\le N\\k\equiv_3 0}}\right) \sum_{\substack{0\le h\le k\\ (h,k)=1}} \ee{-\frac{nh}{k}} \int_{\xi_{h,k}} f\big(\ee{\tau}\big)\ee{-n \phi} e^{2 \pi n \varrho}\ d\phi\nonumber\\
&=:S_1+S_2.
\end{align}

\subsection{Estimate of $S_1$}

When $k\not\equiv 0 \pmod{3}$, we have
\begin{align*}
f\big(\ee{\tau}\big)& =\frac{F\left(\ee{3\tau}\right)^2}{F\left(\ee{\tau}\right)^4}\\
&= \frac{F\left(\ee{\gamma_{(3,k)}\left(3\tau\right)}\right)^2}{F\left(\ee{\gamma_{(1,k)}\left(\tau\right)}\right)^4}e^{\frac{\pi}{12 k}\left(\frac{2}{3z}-6z-\frac{4}{z}+4z\right)}\ee{\frac{2s(3h,k)-4s(h,k)}{2}}\left(3z^{-1}\right),
\end{align*}
where the last equality comes from \eqref{eq:F-m-gamma}. Hence it follows from \eqref{eq:Re-z-bound}--\eqref{eq:bound-2} that
\begin{align*}
&\left|f\big(\ee{\tau}\big)\ee{-n \phi} e^{2 \pi n \varrho}\right|\\
&\quad= \left|\frac{F\left(\ee{\gamma_{(3,k)}\left(3\tau\right)}\right)^2}{F\left(\ee{\gamma_{(1,k)}\left(\tau\right)}\right)^4}\right| \left|e^{-\frac{5\pi}{18 k}\frac{1}{z}-\frac{\pi}{6k}z}\right| \left(3|z|^{-1}\right)e^{2\pi n \varrho}\\
&\quad \le 3 k^{-1} N^2 e^{2\pi\varrho \left(n-\frac{1}{12}\right)} \exp\left(-\frac{5\pi}{36}+\frac{2e^{-\pi/3}}{\left(1-e^{-\pi/3}\right)^2}+\frac{4e^{-\pi}}{\left(1-e^{-\pi}\right)^2}\right).
\end{align*}

We therefore have
\begin{align*}
|S_1|&=\left|\sum_{\substack{1\le k\le N\\k\not\equiv_3 0}}\sum_{\substack{0\le h\le k\\ (h,k)=1}} \ee{-\frac{nh}{k}} \int_{\xi_{h,k}} f\big(\ee{\tau}\big)\ee{-n \phi} e^{2 \pi n \varrho}\ d\phi\right|\\
& \le \sum_{\substack{1\le k\le N\\k\not\equiv_3 0}}\sum_{\substack{0\le h\le k\\ (h,k)=1}} \int_{\xi_{h,k}} 3 k^{-1} N^2 e^{2\pi\varrho \left(n-\frac{1}{12}\right)}\\
&\quad\quad\quad\quad\quad\quad\quad\quad\times \exp\left(-\frac{5\pi}{36}+\frac{2e^{-\pi/3}}{\left(1-e^{-\pi/3}\right)^2}+\frac{4e^{-\pi}}{\left(1-e^{-\pi}\right)^2}\right) \ d\phi\\
&\le \sum_{\substack{1\le k\le N\\k\not\equiv_3 0}}\sum_{\substack{0\le h\le k\\ (h,k)=1}} 3 k^{-1} N^2 e^{2\pi\varrho \left(n-\frac{1}{12}\right)}\\
&\quad\quad\quad\quad\quad\quad\quad\quad\times \exp\left(-\frac{5\pi}{36}+\frac{2e^{-\pi/3}}{\left(1-e^{-\pi/3}\right)^2}+\frac{4e^{-\pi}}{\left(1-e^{-\pi}\right)^2}\right)\frac{2}{kN} \tag{by \eqref{eq:xi-bound}}\\
&\le \sum_{\substack{1\le k\le N\\k\not\equiv_3 0}} 3 k^{-1} N^2 e^{2\pi\varrho \left(n-\frac{1}{12}\right)} \exp\left(-\frac{5\pi}{36}+\frac{2e^{-\pi/3}}{\left(1-e^{-\pi/3}\right)^2}+\frac{4e^{-\pi}}{\left(1-e^{-\pi}\right)^2}\right)\frac{2}{kN}\ k\\
&\le 6\; e^{2\pi\varrho \left(n-\frac{1}{12}\right)} N\big(\log N+1\big) \exp\left(-\frac{5\pi}{36}+\frac{2e^{-\pi/3}}{\left(1-e^{-\pi/3}\right)^2}+\frac{4e^{-\pi}}{\left(1-e^{-\pi}\right)^2}\right)\\
&\le 24.8\; e^{2\pi\varrho \left(n-\frac{1}{12}\right)} N\big(\log N+1\big).
\end{align*}

\subsection{Estimate of $S_2$}

When $k\equiv 0 \pmod{3}$, we write $k=3k'$. Define
$$\omega_{h,k'}:=\ee{\frac{2s(h,k')-4s(h,3k')}{2}}.$$
We also write for convenience
$$\tau':=\gamma_{(1,3k')}(\tau)=\frac{h'+iz^{-1}}{3k'}.$$
Then we know from \eqref{eq:gamma} that
$$\gamma_{(1,k')}\left(3\tau\right)=\frac{h'+iz^{-1}}{k'}=3\tau'.$$
It also follows from \eqref{eq:Re-1-z-bound} that
$$\Im(\tau')=\frac{1}{3k'}\Re\left(\frac{1}{z}\right)\ge \frac{1}{3k'} \frac{3k'}{2}=\frac{1}{2}.$$

We deduce from \eqref{eq:F-m-gamma} that
\begin{align*}
f\big(\ee{\tau}\big)& =\frac{F\left(\ee{3\tau}\right)^2}{F\left(\ee{\tau}\right)^4}\\
&= \frac{F\left(\ee{\gamma_{(1,k')}\left(3\tau\right)}\right)^2}{F\left(\ee{\gamma_{(1,3k')}\left(\tau\right)}\right)^4}\ \omega_{h,k'}\ e^{\frac{\pi}{18k'}\left(\frac{1}{z}-z\right)}\ z^{-1}\\
&=\frac{F\left(\ee{3\tau'}\right)^2}{F\left(\ee{\tau'}\right)^4}\ \omega_{h,k'}\ e^{\frac{\pi}{18k'}\left(\frac{1}{z}-z\right)}\ z^{-1}.
\end{align*}
Then
\begin{align}
S_2&=\sum_{\substack{1\le k\le N\\k\equiv_3 0}}\sum_{\substack{0\le h\le k\\ (h,k)=1}} \ee{-\frac{nh}{k}} \int_{\xi_{h,k}} f\big(\ee{\tau}\big)\ee{-n \phi} e^{2 \pi n \varrho}\ d\phi \nonumber\\
& = \sum_{1\le 3k'\le N}\sum_{\substack{0\le h\le 3k'\\ (h,3k')=1}} \ee{-\frac{nh}{3k'}} \int_{\xi_{h,3k'}} \frac{F\left(\ee{3\tau'}\right)^2}{F\left(\ee{\tau'}\right)^4} \nonumber\\
&\quad\quad\quad\quad\quad\quad\quad\quad\quad\quad\quad\quad\quad\quad\quad \times\omega_{h,k'}\ e^{\frac{\pi}{18k'}\left(\frac{1}{z}-z\right)}\ z^{-1} \ee{-n \phi} e^{2 \pi n \varrho}\ d\phi \nonumber\\
&= \sum_{1\le 3k'\le N}\sum_{\substack{0\le h\le 3k'\\ (h,3k')=1}} \ee{-\frac{nh}{3k'}} \int_{\xi_{h,3k'}} \omega_{h,k'}\ e^{\frac{\pi}{18k'}\left(\frac{1}{z}-z\right)}\ z^{-1} \ee{-n \phi} e^{2 \pi n \varrho}\ d\phi \nonumber\\
&\quad + \sum_{1\le 3k'\le N}\sum_{\substack{0\le h\le 3k'\\ (h,3k')=1}} \ee{-\frac{nh}{3k'}} \int_{\xi_{h,3k'}} \left(\frac{F\left(\ee{3\tau'}\right)^2}{F\left(\ee{\tau'}\right)^4}-1\right) \nonumber\\
&\quad\quad\quad\quad\quad\quad\quad\quad\quad\quad\quad\quad\quad\quad\quad\quad \times\omega_{h,k'}\ e^{\frac{\pi}{18k'}\left(\frac{1}{z}-z\right)}\ z^{-1} \ee{-n \phi} e^{2 \pi n \varrho}\ d\phi \nonumber\\
&=:T_1+T_2.\label{eq:a-S2}
\end{align}

We first estimate $T_2$. It follows from Lemma \ref{le:rough-bound} that
\begin{align*}
\left|\frac{F\left(\ee{3\tau'}\right)^2}{F\left(\ee{\tau'}\right)^4}-1\right|\le \exp\left(\frac{2e^{-6\pi \Im(\tau')}}{\left(1-e^{-6\pi \Im(\tau')}\right)^2}+\frac{4e^{-2\pi \Im(\tau')}}{\left(1-e^{-2\pi \Im(\tau')}\right)^2}\right)-1.
\end{align*}
Hence
\begin{align*}
&\left|\left(\frac{F\left(\ee{3\tau'}\right)^2}{F\left(\ee{\tau'}\right)^4}-1\right)e^{\frac{\pi}{18k'}\frac{1}{z}}\right|=e^{\frac{\pi}{6}\Im(\tau')} \left|\frac{F\left(\ee{3\tau'}\right)^2}{F\left(\ee{\tau'}\right)^4}-1\right|\\
&\quad \le e^{\frac{\pi}{6}\Im(\tau')}  \left(\exp\left(\frac{2e^{-6\pi \Im(\tau')}}{\left(1-e^{-6\pi \Im(\tau')}\right)^2}+\frac{4e^{-2\pi \Im(\tau')}}{\left(1-e^{-2\pi \Im(\tau')}\right)^2}\right)-1\right)\\
&\quad \le e^{\frac{\pi}{12}} \left(\exp\left(\frac{2e^{-3\pi}}{\left(1-e^{-3\pi }\right)^2}+\frac{4e^{-\pi }}{\left(1-e^{-\pi }\right)^2}\right)-1\right),
\end{align*}
since the second line is indeed a decreasing function of $\Im(\tau')$ for $\Im(\tau')\ge 1/2$. This yields
\begin{align*}
|T_2|&\le \sum_{1\le 3k'\le N}\sum_{\substack{0\le h\le 3k'\\ (h,3k')=1}}  \int_{\xi_{h,3k'}} e^{2\pi\varrho \left(n-\frac{1}{12}\right)}\ \frac{N^2}{3k'}\ e^{\frac{\pi}{12}}\\
&\quad\quad\quad\quad\quad\quad\quad\quad\quad\times \left(\exp\left(\frac{2e^{-3\pi}}{\left(1-e^{-3\pi }\right)^2}+\frac{4e^{-\pi }}{\left(1-e^{-\pi }\right)^2}\right)-1\right)\ d\phi\\
&\le \sum_{1\le 3k'\le N}\sum_{\substack{0\le h\le 3k'\\ (h,3k')=1}}  e^{2\pi\varrho \left(n-\frac{1}{12}\right)}\ \frac{N^2}{3k'}\ e^{\frac{\pi}{12}}\\
&\quad\quad\quad\quad\quad\quad\quad\quad\quad\times \left(\exp\left(\frac{2e^{-3\pi}}{\left(1-e^{-3\pi }\right)^2}+\frac{4e^{-\pi }}{\left(1-e^{-\pi }\right)^2}\right)-1\right)\frac{2}{3k'N}\\
&\le \sum_{1\le 3k'\le N} e^{2\pi\varrho \left(n-\frac{1}{12}\right)}\ \frac{N^2}{3k'}\ e^{\frac{\pi}{12}}\\
&\quad\quad\quad\quad\quad\quad\quad\quad\quad\times \left(\exp\left(\frac{2e^{-3\pi}}{\left(1-e^{-3\pi }\right)^2}+\frac{4e^{-\pi }}{\left(1-e^{-\pi }\right)^2}\right)-1\right)\frac{2}{3k'N}\ 3k'\\
&\le 2\;e^{2\pi\varrho \left(n-\frac{1}{12}\right)} N\big(\log N+1\big) e^{\frac{\pi}{12}} \left(\exp\left(\frac{2e^{-3\pi}}{\left(1-e^{-3\pi }\right)^2}+\frac{4e^{-\pi }}{\left(1-e^{-\pi }\right)^2}\right)-1\right)\\
&\le 0.6\;e^{2\pi\varrho \left(n-\frac{1}{12}\right)} N\big(\log N+1\big).
\end{align*}

We now estimate $T_1$, which gives the main contribution. The following evaluation of an integral plays an important role.

\begin{lemma}\label{le:key-int}
Let $\gcd(h,k)=1$ and define
\begin{equation}
I:=\int_{\xi_{h,k}} e^{\frac{\pi}{6k}\left(\frac{1}{z}-z\right)}z^{-1}\ee{-n\phi}e^{2\pi n \varrho}\ d\phi.
\end{equation}
Then
\begin{equation}
I=\frac{2\pi}{k}\; I_0\left(\frac{2\pi \sqrt{n-\frac{1}{12}}}{\sqrt{3}k}\right)+E(I)
\end{equation}
with
\begin{equation}
|E(I)|\le5.2\; \frac{e^{2\pi\varrho \left(n-\frac{1}{12}\right)}N}{n-\frac{1}{12}}.
\end{equation}
\end{lemma}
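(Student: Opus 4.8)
The plan is to turn $I$ into a contour integral in the variable $z=k(\varrho-i\phi)$, read off a modified Bessel function as the contribution of the essential singularity at the origin, and estimate the two leftover horizontal pieces as the error $E(I)$.

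First I would change variables. Since $z=k(\varrho-i\phi)$ we have $d\phi=\frac{i}{k}\,dz$, and as $\phi$ runs over $\xi_{h,k}=[-\theta'_{h,k},\theta''_{h,k}]$ the point $z$ traverses the vertical segment $\mathcal{C}$ on the line $\Re z=k\varrho=k/N^{2}$ \emph{downward}, from $z_{1}=k\varrho+ik\theta'_{h,k}$ to $z_{2}=k\varrho-ik\theta''_{h,k}$. The decisive identity is $\varrho-i\phi=z/k$, which lets me merge every exponential factor:
\[
e^{\frac{\pi}{6k}\left(1/z-z\right)}\,\ee{-n\phi}\,e^{2\pi n\varrho}=\exp\!\left(\frac{A}{z}+Bz\right),\qquad A=\frac{\pi}{6k},\quad B=\frac{2\pi\left(n-\frac{1}{12}\right)}{k}.
\]
Thus $I=\frac{i}{k}\int_{\mathcal{C}}z^{-1}e^{A/z+Bz}\,dz$, where $B>0$ for $n\ge1$.

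Next I would extract the main term. The integrand $g(z)=z^{-1}e^{A/z+Bz}$ is holomorphic on $\mathbb{C}\setminus\{0\}$ with an essential singularity at the origin, and picking off the coefficient of $z^{-1}$ in its Laurent series gives $\operatorname{Res}_{z=0}g=\sum_{j\ge0}(AB)^{j}/(j!)^{2}=I_{0}\bigl(2\sqrt{AB}\bigr)$, with $2\sqrt{AB}=\frac{2\pi\sqrt{n-1/12}}{\sqrt{3}\,k}$, exactly the argument in the statement. I would then apply Cauchy's theorem to the rectangle whose right side is $\mathcal{C}$ reversed to run upward, whose horizontal sides $T_{1}$ (at height $k\theta'_{h,k}$) and $T_{2}$ (at height $-k\theta''_{h,k}$) run leftward to $\Re z=-M$, and whose left side lies on $\Re z=-M$. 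Because $B>0$, the left side contributes $O\!\left(M^{-1}e^{-BM}\right)\to0$, so letting $M\to\infty$ gives $\int_{\mathcal{C}}=-2\pi i\,I_{0}\bigl(2\sqrt{AB}\bigr)+\int_{T_{1}}g+\int_{T_{2}}g$. Multiplying by $i/k$ produces $I=\frac{2\pi}{k}I_{0}(\cdots)+E(I)$ with $E(I)=\frac{i}{k}\bigl(\int_{T_{1}}g+\int_{T_{2}}g\bigr)$.

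Finally I would bound the tails, which is where the real work lies. On $T_{1}$ one has $\Im z=k\theta'_{h,k}=:\alpha$, so $|z|^{-1}\le\alpha^{-1}$ and $\lvert e^{Bz}\rvert=e^{B\Re z}$. I split the real part at $0$: for $\Re z\le0$ we have $\Re(1/z)\le0$, hence $\lvert e^{A/z}\rvert\le1$; for $0\le\Re z\le k\varrho$ I bound $\Re(1/z)=\Re z/|z|^{2}\le k\varrho/\alpha^{2}\le4k$, using the standard Farey lower bound $k\theta'_{h,k},k\theta''_{h,k}\ge\frac{1}{2N}$ (distance to a mediant equals $\frac{1}{k(k+k_{j})}$ with $k+k_{j}\le 2N$), so that $A\,\Re(1/z)\le\frac{2\pi}{3}$. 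Since $\int_{-\infty}^{k\varrho}e^{Bx}\,dx=\frac{1}{B}e^{Bk\varrho}=\frac{k}{2\pi(n-\frac1{12})}e^{2\pi\varrho(n-\frac1{12})}$, collecting constants gives $\bigl|\int_{T_{1}}g\bigr|\le C\,k\,N\,(n-\frac1{12})^{-1}e^{2\pi\varrho(n-\frac1{12})}$ for an absolute $C$, and likewise for $T_{2}$; the prefactor $1/k$ in $E(I)$ cancels this $k$, yielding the stated $|E(I)|\le5.2\,N(n-\frac1{12})^{-1}e^{2\pi\varrho(n-\frac1{12})}$. The main obstacle is precisely this estimate: near the arc's endpoints $e^{A/z}$ has $\Re(1/z)$ as large as $O(k)$, so one cannot simply bound it by its global maximum $1/(2\alpha)$; instead one must exploit that the offending region $0\le\Re z\le k\varrho$ has length only $k/N^{2}$ and invoke the Farey lower bound on $|\Im z|$, all while tracking constants so that the $1/k$ from the change of variables exactly absorbs the $k$ coming from $\int e^{Bx}\,dx=1/B$.
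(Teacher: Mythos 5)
Your proposal is correct and follows essentially the same route as the paper: after the change of variables you complete the vertical Farey segment to a closed contour around the origin (the paper uses a Hankel contour and cites the contour-integral representation of $I_{0}$, whereas you close a rectangle and read off $I_{0}\bigl(2\sqrt{AB}\bigr)$ as the residue from the Laurent expansion — the same computation in different clothing). Your tail estimates are also the paper's: the split of $\Re(1/z)$ according to the sign of $\Re z$, the bound $e^{A\Re(1/z)}\le e^{2\pi/3}$ via the Farey lower bound $k|\theta|\ge\frac{1}{2N}$, the bound $|z|^{-1}\le 2N$, and the factor $\frac{1}{B}e^{Bk\varrho}$ from the exponential integral combine to give exactly the paper's constant $2.6$ per tail, hence $5.2$ in total.
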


\begin{proof}
Making the following change of variables $w=z/k=\varrho-i\phi$ yields
$$I=\frac{1}{2\pi i}\int_{\varrho-i\theta''_{h,k}}^{\varrho+i\theta'_{h,k}} 2\pi e^{\frac{\pi}{6k^2 w}} e^{2\pi w \left(n-\frac{1}{12}\right)}(kw)^{-1}\ dw.$$
We may further separate the integral as
\begin{align*}
I&=\frac{1}{2\pi i}\left(\int_\Gamma-\int_{-\infty-i\theta''_{h,k}}^{\varrho-i\theta''_{h,k}}+\int_{-\infty+i\theta'_{h,k}}^{\varrho+i\theta'_{h,k}}\right) 2\pi e^{\frac{\pi}{6k^2 w}} e^{2\pi w \left(n-\frac{1}{12}\right)}(kw)^{-1}\ dw\\
&=:J_1-J_2+J_3,
\end{align*}
where
\begin{align*}
\Gamma:=(-\infty-i\theta''_{h,k}) \to (\varrho-i\theta''_{h,k}) \to (\varrho+i\theta'_{h,k}) \to (-\infty+i\theta'_{h,k})
\end{align*}
is a Hankel contour.

To bound $J_2$ and $J_3$, which contribute to $E(I)$, we write $w=x+i\theta$ with $-\infty\le x\le \varrho$ and $\theta\in\{\theta'_{h,k},-\theta''_{h,k}\}$. Following Chan \cite[p.~120]{chan2005}, we have (noting that $\frac{1}{2kN}\le |\theta|\le \frac{1}{kN}$)
\begin{align*}
\left|e^{\frac{\pi}{6k^2 w}}\right|&=e^{\frac{\pi}{6k^2}\Re\left(\frac{1}{w}\right)}=e^{\frac{\pi}{6k^2}\frac{x}{x^2+\theta^2}}\\
&\le e^{\frac{\pi}{6k^2}\frac{x}{\theta^2}}\le e^{\frac{\pi}{6k^2}\varrho (2kN)^2}=e^{\frac{2\pi}{3}},\\
\left|e^{2\pi w \left(n-\frac{1}{12}\right)}\right|&\le e^{2\pi x \left(n-\frac{1}{12}\right)},\\
\left|(kw)^{-1}\right|&\le \frac{1}{k\sqrt{x^2+\theta^2}}\le \frac{1}{k|\theta|}\le 2N.
\end{align*}
Hence for $j=2$ and $3$, we have
\begin{align*}
|J_j|&\le \frac{1}{2\pi} \int_{-\infty}^{\varrho} 2\pi e^{\frac{2\pi}{3}}e^{2\pi x \left(n-\frac{1}{12}\right)}\ 2N\ dx\\
&=\frac{e^{\frac{2\pi}{3}} N}{\pi \left(n-\frac{1}{12}\right)}e^{2\pi \varrho \left(n-\frac{1}{12}\right)}\le 2.6 \frac{N}{n-\frac{1}{12}}e^{2\pi \varrho \left(n-\frac{1}{12}\right)}.
\end{align*}
This implies that
$$|-J_2+J_3|\le |J_2|+|J_3|\le 5.2\; \frac{e^{2\pi\varrho \left(n-\frac{1}{12}\right)}N}{n-\frac{1}{12}}.$$

We now compute the main term $J_1$. Making the following change of variables $t=wk\sqrt{12n-1}$ yields
$$J_1=\frac{2\pi}{k}\frac{1}{2\pi i}\int_{\tilde{\Gamma}} e^{\frac{\pi \sqrt{n-\frac{1}{12}}}{\sqrt{3}k}\left(t+\frac{1}{t}\right)} t^{-1}\  dt.$$
Note that the new contour $\tilde{\Gamma}$ is still a Hankel contour. From the contour integral representation of $I_s(x)$ (cf.~\cite[pp.~610--616]{Arf1985})
$$I_s(x)=\frac{1}{2\pi i}\int_{\Gamma} t^{-s-1}e^{\frac{x}{2}\left(t+\frac{1}{t}\right)}\ dt\quad\text{($\Gamma$ is a Hankel contour)},$$
we conclude
$$J_1=\frac{2\pi}{k}\; I_0\left(\frac{2\pi \sqrt{n-\frac{1}{12}}}{\sqrt{3}k}\right),$$
and therefore complete our proof.
\end{proof}

It follows from Lemma \ref{le:key-int} with $k=3k'$ that
\begin{align*}
T_1&=\sum_{1\le 3k'\le N}\sum_{\substack{0\le h\le 3k'\\ (h,3k')=1}} \ee{-\frac{nh}{3k'}} \int_{\xi_{h,3k'}} \omega_{h,k'}\ e^{\frac{\pi}{18k'}\left(\frac{1}{z}-z\right)}\ z^{-1} \ee{-n \phi} e^{2 \pi n \varrho}\ d\phi\\
&=D_n+\sum_{1\le 3k'\le N}\sum_{\substack{0\le h\le 3k'\\ (h,3k')=1}} \ee{-\frac{nh}{3k'}} \omega_{h,k'} \frac{2\pi}{3k'}\; I_0\left(\frac{2\pi \sqrt{n-\frac{1}{12}}}{3\sqrt{3}k'}\right),
\end{align*}
with
\begin{align*}
|D_n|\le \sum_{1\le 3k'\le N}\sum_{\substack{0\le h\le 3k'\\ (h,3k')=1}} 5.2\; \frac{e^{2\pi\varrho \left(n-\frac{1}{12}\right)}N}{n-\frac{1}{12}} \le \frac{5.2}{3}\; \frac{e^{2\pi\varrho \left(n-\frac{1}{12}\right)}N^3}{n-\frac{1}{12}}.
\end{align*}

For the main term in $T_1$, we compute the $k'=1$ case
\begin{align*}
\sum_{\substack{0\le h\le 3\\ (h,3)=1}} \ee{-\frac{nh}{3}} \omega_{h,1} \frac{2\pi}{3}&=\begin{cases}
\frac{4\pi}{3} \cos\frac{2\pi}{9} & \text{if $n\equiv 0 \pmod{3}$},\\
-\frac{4\pi}{3} \cos\frac{\pi}{9} & \text{if $n\equiv 1 \pmod{3}$},\\
\frac{4\pi}{3} \sin\frac{\pi}{18} & \text{if $n\equiv 2 \pmod{3}$}
\end{cases}\\
&=:c(n).
\end{align*}
Hence we obtain the main term in \eqref{eq:main-asy-a-1}
$$c(n) \; I_0\left(\frac{2\pi \sqrt{n-\frac{1}{12}}}{3\sqrt{3}}\right).$$

For $2\le k'\le N/3$, using \eqref{add-I-upper} we deduce that
\begin{align*}
|H_n|:=&\left|\sum_{2\le k'\le N/3}\sum_{\substack{0\le h\le 3k'\\ (h,3k')=1}} \ee{-\frac{nh}{3k'}} \omega_{h,k'} \frac{2\pi}{3k'}\; I_0\left(\frac{2\pi \sqrt{n-\frac{1}{12}}}{3\sqrt{3}k'}\right)\right|\\
\le& \sum_{2\le k'\le N/3} 3k' \frac{2\pi}{3k'} \sqrt{\frac{\pi}{8}} \frac{e^{\frac{2\pi \sqrt{n-\frac{1}{12}}}{3\sqrt{3}k'}}}{\sqrt{\frac{2\pi \sqrt{n-\frac{1}{12}}}{3\sqrt{3}k'}}}\\
\le& \frac{3^{-\frac{3}{4}}\pi}{2}N^{\frac{3}{2}} \left(n-\frac{1}{12}\right)^{-\frac{1}{4}}  e^{\frac{\pi \sqrt{n-\frac{1}{12}}}{3\sqrt{3}}}\\
\le& 0.7\;N^{\frac{3}{2}} \left(n-\frac{1}{12}\right)^{-\frac{1}{4}}  e^{\frac{\pi \sqrt{n-\frac{1}{12}}}{3\sqrt{3}}},
\end{align*}
where we use the trivial bound $\sum\limits_{n=1}^{N/3} n^{1/2}\le 3^{-3/2}N^{3/2}$ for $N\ge 1$.

\subsection{The asymptotic formula of $a(n)$}

It follows that the total error term is
\begin{align*}
|E(n)|&\le|S_1|+|T_2|+|D_n|+|H_n|\\
& \le 25.4\; e^{2\pi\varrho \left(n-\frac{1}{12}\right)} N\big(\log N+1\big)+\frac{5.2}{3}\; \frac{e^{2\pi\varrho \left(n-\frac{1}{12}\right)}N^3}{n-\frac{1}{12}}\\
&\quad +0.7\;N^{\frac{3}{2}} \left(n-\frac{1}{12}\right)^{-\frac{1}{4}}  e^{\frac{\pi \sqrt{n-\frac{1}{12}}}{3\sqrt{3}}}.
\end{align*}
Setting $N=\sqrt{2\pi (n-1/12)}$ yields
\begin{align*}
|E(n)|& \le 25.4\; e \sqrt{2\pi} \sqrt{n-\frac{1}{12}}\; \left(\log \sqrt{2\pi \left(n-\frac{1}{12}\right)}+1\right)\\
&\quad+\frac{5.2\;e}{3} \left(\sqrt{2\pi}\right)^3\sqrt{n-\frac{1}{12}}\\
&\quad +0.7\;\left(\sqrt{2\pi}\right)^{\frac{3}{2}} \sqrt{n-\frac{1}{12}} \; e^{\frac{\pi \sqrt{n-\frac{1}{12}}}{3\sqrt{3}}}\\
&\le 173.1\; \sqrt{n-\frac{1}{12}}\; \left(\log \sqrt{2\pi \left(n-\frac{1}{12}\right)}+1\right) + 74.3\; \sqrt{n-\frac{1}{12}}\\
&\quad+ 2.8\; \sqrt{n-\frac{1}{12}} \; e^{\frac{\pi \sqrt{n-\frac{1}{12}}}{3\sqrt{3}}}.
\end{align*}

Together with the main term
$$c(n) \; I_0\left(\frac{2\pi \sqrt{n-\frac{1}{12}}}{3\sqrt{3}}\right),$$
we arrive at the desired asymptotic formula.

\subsection{Proof of Theorem \ref{THM:mod 3}}

We deduce from a short computation that the sign of $M^{*}(0,3,n)-M^{*}(1,3,n)$ is determined by the main term (and hence by $c(n)$) when $n\ge 114$. For the remaining cases, we check them directly and hence arrive at the desired inequalities.

\section{Asymptotic behavior of $M^{*}(0,4,n)-M^{*}(2,4,n)$}\label{asymptotic-4}

This time we take $z=i$ in \eqref{bicrank:gf} to obtain
\begin{align}
\sum_{n=0}^{\infty}\left(M^{*}(0,4,n)-M^{*}(2,4,n)\right)q^{n} &=\dfrac{(q;q)_{\infty}^{2}}{(iq,-iq,-q,-q;q)_{\infty}}=\dfrac{(q;q)_{\infty}^{2}}{(-q^{2};q^{2})_{\infty}
(-q;q)_{\infty}^{2}}\nonumber\\
 &=\dfrac{(q;q)_{\infty}^{4}}{(q^{2};q^{2})_{\infty}(q^{4};q^{4})_{\infty}}.\label{eq:gen-mod-4}
\end{align}

Again, we write for convenience
\begin{equation*}
b(n)=M^{*}(0,4,n)-M^{*}(2,4,n).
\end{equation*}
We will only outline the proof of the asymptotic formula of $b(n)$ as it has the same manner as the previous proof.

Let
\begin{equation*}
f\left(\ee{\tau}\right)=\sum_{n=0}^\infty b(n) q^n = \frac{\f{1}^4}{\f{2} \f{4}}=\frac{F\left(\ee{2\tau}\right)F\left(\ee{4\tau}\right)}{F\left(\ee{\tau}\right)^4}.
\end{equation*}
It follows from \eqref{eq:cauchy-var} that
\begin{align*}
b(n)&=\sum_{1\le k\le N} \sum_{\substack{0\le h\le k\\ (h,k)=1}} \ee{-\frac{nh}{k}} \int_{\xi_{h,k}} f\big(\ee{\tau}\big)\ee{-n \phi} e^{2 \pi n \varrho}\ d\phi\nonumber\\
&=\left(\sum_{\substack{1\le k\le N\\k\equiv_2 1}}+\sum_{\substack{1\le k\le N\\k\equiv_4 2}}+\sum_{\substack{1\le k\le N\\k\equiv_4 0}}\right) \sum_{\substack{0\le h\le k\\ (h,k)=1}} \ee{-\frac{nh}{k}} \int_{\xi_{h,k}} f\big(\ee{\tau}\big)\ee{-n \phi} e^{2 \pi n \varrho} d\phi\nonumber\\
&=:S_1+S_2+S_3.
\end{align*}

\subsection{Estimate of $S_1$}

We have
\begin{align*}
|S_1|&=\left|\sum_{\substack{1\le k\le N\\k\equiv_2 1}}\sum_{\substack{0\le h\le k\\ (h,k)=1}} \ee{-\frac{nh}{k}} \int_{\xi_{h,k}} f\big(\ee{\tau}\big)\ee{-n \phi} e^{2 \pi n \varrho}\ d\phi\right|\\
&\le \sum_{\substack{1\le k\le N\\k\equiv_2 1}}\sum_{\substack{0\le h\le k\\ (h,k)=1}} \int_{\xi_{h,k}} \left|\frac{F\left(\ee{\gamma_{(2,k)}\left(2\tau\right)}\right) F\left(\ee{\gamma_{(4,k)}\left(4\tau\right)}\right)}{F\left(\ee{\gamma_{(1,k)}\left(\tau\right)}\right)^4}\right|\\
&\quad\quad\quad\quad\quad\quad\quad\quad\times \left|e^{-\frac{13\pi}{48 k}\frac{1}{z}-\frac{\pi}{6k}z}\right| \left(2\sqrt{2}|z|^{-1}\right)e^{2\pi n \varrho}\ d\phi\\
& \le \sum_{\substack{1\le k\le N\\k\equiv_2 1}}\sum_{\substack{0\le h\le k\\ (h,k)=1}} \int_{\xi_{h,k}} 2\sqrt{2} k^{-1} N^2 e^{2\pi\varrho \left(n-\frac{1}{12}\right)}\\
&\quad\quad\quad\quad\times \exp\left(-\frac{13\pi}{96}+\frac{e^{-\pi/2}}{\left(1-e^{-\pi/2}\right)^2}+\frac{e^{-\pi/4}}{\left(1-e^{-\pi/4}\right)^2}+\frac{4e^{-\pi}}{\left(1-e^{-\pi}\right)^2}\right) \ d\phi\\
&\le \sum_{\substack{1\le k\le N\\k\equiv_2 1}}\sum_{\substack{0\le h\le k\\ (h,k)=1}} 2\sqrt{2} k^{-1} N^2 e^{2\pi\varrho \left(n-\frac{1}{12}\right)}\\
&\quad\quad\quad\quad\times \exp\left(-\frac{13\pi}{96}+\frac{e^{-\pi/2}}{\left(1-e^{-\pi/2}\right)^2}+\frac{e^{-\pi/4}}{\left(1-e^{-\pi/4}\right)^2}+\frac{4e^{-\pi}}{\left(1-e^{-\pi}\right)^2}\right) \frac{2}{kN}\\
&\le \sum_{\substack{1\le k\le N\\k\equiv_2 1}} 2\sqrt{2} k^{-1} N^2 e^{2\pi\varrho \left(n-\frac{1}{12}\right)}\\
&\quad\quad\quad\quad\times \exp\left(-\frac{13\pi}{96}+\frac{e^{-\pi/2}}{\left(1-e^{-\pi/2}\right)^2}+\frac{e^{-\pi/4}}{\left(1-e^{-\pi/4}\right)^2}+\frac{4e^{-\pi}}{\left(1-e^{-\pi}\right)^2}\right) \frac{2}{kN}\ k\\
&\le 4\sqrt{2}\; e^{2\pi\varrho \left(n-\frac{1}{12}\right)} N\big(\log N+1\big)\\
&\quad\quad\quad\quad\times \exp\left(-\frac{13\pi}{96}+\frac{e^{-\pi/2}}{\left(1-e^{-\pi/2}\right)^2}+\frac{e^{-\pi/4}}{\left(1-e^{-\pi/4}\right)^2}+\frac{4e^{-\pi}}{\left(1-e^{-\pi}\right)^2}\right)\\
&\le 29.1\; e^{2\pi\varrho \left(n-\frac{1}{12}\right)} N\big(\log N+1\big).
\end{align*}

\subsection{Estimate of $S_2$}

Let $k=2k'$ with $k'\equiv 1 \pmod{2}$. We have
\begin{align*}
|S_2|&=\left|\sum_{\substack{1\le k\le N\\k\equiv_4 2}}\sum_{\substack{0\le h\le k\\ (h,k)=1}} \ee{-\frac{nh}{k}} \int_{\xi_{h,k}} f\big(\ee{\tau}\big)\ee{-n \phi} e^{2 \pi n \varrho}\ d\phi\right|\\
&\le \sum_{\substack{1\le 2k'\le N\\k'\equiv_2 1}}\sum_{\substack{0\le h\le 2k'\\ (h,2k')=1}} \int_{\xi_{h,2k'}} \left|\frac{F\left(\ee{\gamma_{(1,k')}\left(2\tau\right)}\right) F\left(\ee{\gamma_{(2,k')}\left(4\tau\right)}\right)}{F\left(\ee{\gamma_{(1,2k')}\left(\tau\right)}\right)^4}\right|\\
&\quad\quad\quad\quad\quad\quad\quad\quad\times \left|e^{-\frac{\pi}{24 k'}\frac{1}{z}-\frac{\pi}{12k'}z}\right| \left(\sqrt{2}|z|^{-1}\right)e^{2\pi n \varrho}\ d\phi\\
& \le \sum_{\substack{1\le 2k'\le N\\k'\equiv_2 1}}\sum_{\substack{0\le h\le 2k'\\ (h,2k')=1}} \int_{\xi_{h,2k'}} \sqrt{2} \left(2k'\right)^{-1} N^2 e^{2\pi\varrho \left(n-\frac{1}{12}\right)}\\
&\quad\quad\quad\quad\times \exp\left(-\frac{\pi}{24}+\frac{e^{-2\pi}}{\left(1-e^{-2\pi}\right)^2}+\frac{e^{-\pi}}{\left(1-e^{-\pi}\right)^2}+\frac{4e^{-\pi}}{\left(1-e^{-\pi}\right)^2}\right) \ d\phi\\
&\le \sum_{\substack{1\le 2k'\le N\\k'\equiv_2 1}}\sum_{\substack{0\le h\le 2k'\\ (h,2k')=1}} \sqrt{2} \left(2k'\right)^{-1} N^2 e^{2\pi\varrho \left(n-\frac{1}{12}\right)}\\
&\quad\quad\quad\quad\times \exp\left(-\frac{\pi}{24}+\frac{e^{-2\pi}}{\left(1-e^{-2\pi}\right)^2}+\frac{e^{-\pi}}{\left(1-e^{-\pi}\right)^2}+\frac{4e^{-\pi}}{\left(1-e^{-\pi}\right)^2}\right) \frac{2}{2k'N}\\
&\le \sum_{\substack{1\le 2k'\le N\\k'\equiv_2 1}} \sqrt{2} \left(2k'\right)^{-1} N^2 e^{2\pi\varrho \left(n-\frac{1}{12}\right)}\\
&\quad\quad\quad\quad\times \exp\left(-\frac{\pi}{24}+\frac{e^{-2\pi}}{\left(1-e^{-2\pi}\right)^2}+\frac{e^{-\pi}}{\left(1-e^{-\pi}\right)^2}+\frac{4e^{-\pi}}{\left(1-e^{-\pi}\right)^2}\right) \frac{2}{2k'N}\ 2k'\\
&\le 2\sqrt{2}\; e^{2\pi\varrho \left(n-\frac{1}{12}\right)} N\big(\log N+1\big)\\
&\quad\quad\quad\quad\times \exp\left(-\frac{\pi}{24}+\frac{e^{-2\pi}}{\left(1-e^{-2\pi}\right)^2}+\frac{e^{-\pi}}{\left(1-e^{-\pi}\right)^2}+\frac{4e^{-\pi}}{\left(1-e^{-\pi}\right)^2}\right)\\
&\le 3.2\; e^{2\pi\varrho \left(n-\frac{1}{12}\right)} N\big(\log N+1\big).
\end{align*}

\subsection{Estimate of $S_3$}

Let $k=4k'$. Define
$$\omega_{h,k'}:=\ee{\frac{s(h,2k')+s(h,k')-4s(h,4k')}{2}}.$$
We also write for convenience
$$\tau':=\gamma_{(1,4k')}(\tau)=\frac{h'+iz^{-1}}{4k'}.$$
Again, we have $\Im(\tau')\ge 1/2$.

Similar to \eqref{eq:a-S2}, we separate $S_3$ into two parts:
\begin{align*}
S_3&=\sum_{\substack{1\le k\le N\\k\equiv_4 0}}\sum_{\substack{0\le h\le k\\ (h,k)=1}} \ee{-\frac{nh}{k}} \int_{\xi_{h,k}} f\big(\ee{\tau}\big)\ee{-n \phi} e^{2 \pi n \varrho}\ d\phi \nonumber\\
& = \sum_{1\le 4k'\le N}\sum_{\substack{0\le h\le 4k'\\ (h,4k')=1}} \ee{-\frac{nh}{4k'}} \int_{\xi_{h,4k'}} \frac{F\left(\ee{2\tau'}\right)F\left(\ee{4\tau'}\right)}{F\left(\ee{\tau'}\right)^4} \nonumber\\
&\quad\quad\quad\quad\quad\quad\quad\quad\quad\quad\quad\quad\quad\quad\quad \times\omega_{h,k'}\ e^{\frac{\pi}{24k'}\left(\frac{1}{z}-z\right)}\ z^{-1} \ee{-n \phi} e^{2 \pi n \varrho}\ d\phi \nonumber\\
&= \sum_{1\le 4k'\le N}\sum_{\substack{0\le h\le 4k'\\ (h,4k')=1}} \ee{-\frac{nh}{4k'}} \int_{\xi_{h,4k'}} \omega_{h,k'}\ e^{\frac{\pi}{24k'}\left(\frac{1}{z}-z\right)}\ z^{-1} \ee{-n \phi} e^{2 \pi n \varrho}\ d\phi \nonumber\\
&\quad + \sum_{1\le 4k'\le N}\sum_{\substack{0\le h\le 4k'\\ (h,4k')=1}} \ee{-\frac{nh}{4k'}} \int_{\xi_{h,4k'}} \left(\frac{F\left(\ee{2\tau'}\right)F\left(\ee{4\tau'}\right)}{F\left(\ee{\tau'}\right)^4}-1\right) \nonumber\\
&\quad\quad\quad\quad\quad\quad\quad\quad\quad\quad\quad\quad\quad\quad\quad\quad \times\omega_{h,k'}\ e^{\frac{\pi}{24k'}\left(\frac{1}{z}-z\right)}\ z^{-1} \ee{-n \phi} e^{2 \pi n \varrho}\ d\phi \nonumber\\
&=:T_1+T_2.
\end{align*}

We estimate $T_2$:
\begin{align*}
|T_2|&\le \sum_{1\le 4k'\le N}\sum_{\substack{0\le h\le 4k'\\ (h,4k')=1}}  \int_{\xi_{h,4k'}} e^{2\pi\varrho \left(n-\frac{1}{12}\right)}\ \frac{N^2}{4k'}\ e^{\frac{\pi}{12}}\\
&\quad\quad\quad\quad\times \left(\exp\left(\frac{e^{-2\pi}}{\left(1-e^{-2\pi }\right)^2}+\frac{e^{-4\pi}}{\left(1-e^{-4\pi }\right)^2}+\frac{4e^{-\pi }}{\left(1-e^{-\pi }\right)^2}\right)-1\right)\ d\phi\\
&\le \sum_{1\le 4k'\le N}\sum_{\substack{0\le h\le 4k'\\ (h,4k')=1}}  e^{2\pi\varrho \left(n-\frac{1}{12}\right)}\ \frac{N^2}{4k'}\ e^{\frac{\pi}{12}}\\
&\quad\quad\quad\quad\times \left(\exp\left(\frac{e^{-2\pi}}{\left(1-e^{-2\pi }\right)^2}+\frac{e^{-4\pi}}{\left(1-e^{-4\pi }\right)^2}+\frac{4e^{-\pi }}{\left(1-e^{-\pi }\right)^2}\right)-1\right)\frac{2}{4k'N}\\
&\le \sum_{1\le 4k'\le N} e^{2\pi\varrho \left(n-\frac{1}{12}\right)}\ \frac{N^2}{4k'}\ e^{\frac{\pi}{12}}\\
&\quad\quad\quad\quad\times \left(\exp\left(\frac{e^{-2\pi}}{\left(1-e^{-2\pi }\right)^2}+\frac{e^{-4\pi}}{\left(1-e^{-4\pi }\right)^2}+\frac{4e^{-\pi }}{\left(1-e^{-\pi }\right)^2}\right)-1\right)\frac{2}{4k'N}\ 4k'\\
&\le 2\;e^{2\pi\varrho \left(n-\frac{1}{12}\right)} N\big(\log N+1\big) e^{\frac{\pi}{12}}\\
&\quad\quad\quad\quad\times \left(\exp\left(\frac{e^{-2\pi}}{\left(1-e^{-2\pi }\right)^2}+\frac{e^{-4\pi}}{\left(1-e^{-4\pi }\right)^2}+\frac{4e^{-\pi }}{\left(1-e^{-\pi }\right)^2}\right)-1\right)\\
&\le 0.6\;e^{2\pi\varrho \left(n-\frac{1}{12}\right)} N\big(\log N+1\big).
\end{align*}

Next, it follows from Lemma \ref{le:key-int} with $k=4k'$ that
\begin{align*}
T_1&=\sum_{1\le 4k'\le N}\sum_{\substack{0\le h\le 4k'\\ (h,4k')=1}} \ee{-\frac{nh}{4k'}} \int_{\xi_{h,4k'}} \omega_{h,k'}\ e^{\frac{\pi}{24k'}\left(\frac{1}{z}-z\right)}\ z^{-1} \ee{-n \phi} e^{2 \pi n \varrho}\ d\phi\\
&=D_n+\sum_{1\le 4k'\le N}\sum_{\substack{0\le h\le 4k'\\ (h,4k')=1}} \ee{-\frac{nh}{4k'}} \omega_{h,k'} \frac{2\pi}{4k'}\; I_0\left(\frac{2\pi \sqrt{n-\frac{1}{12}}}{4\sqrt{3}k'}\right),
\end{align*}
with
\begin{align*}
|D_n|\le \sum_{1\le 4k'\le N}\sum_{\substack{0\le h\le 4k'\\ (h,4k')=1}} 5.2\; \frac{e^{2\pi\varrho \left(n-\frac{1}{12}\right)}N}{n-\frac{1}{12}} \le \frac{5.2}{4}\; \frac{e^{2\pi\varrho \left(n-\frac{1}{12}\right)}N^3}{n-\frac{1}{12}}.
\end{align*}

For the main term in $T_1$, we compute the $k'=1$ case
\begin{align*}
\sum_{\substack{0\le h\le 4\\ (h,4)=1}} \ee{-\frac{nh}{4}} \omega_{h,1} \frac{2\pi}{4}&=\begin{cases}
-\pi & \text{if $n\equiv 1 \pmod{4}$},\\
\pi & \text{if $n\equiv 3 \pmod{4}$},\\
0 & \text{otherwise}
\end{cases}\\
&=:c_{1}(n),
\end{align*}
and the $k'=2$ case
\begin{align*}
\sum_{\substack{0\le h\le 8\\ (h,8)=1}} \ee{-\frac{nh}{8}} \omega_{h,2} \frac{2\pi}{8}&=\begin{cases}
\pi \sin\frac{\pi}{8} & \text{if $n\equiv 0 \pmod{8}$},\\
\pi \cos\frac{\pi}{8} & \text{if $n\equiv 2 \pmod{8}$},\\
-\pi \sin\frac{\pi}{8} & \text{if $n\equiv 4 \pmod{8}$},\\
-\pi \cos\frac{\pi}{8} & \text{if $n\equiv 6 \pmod{8}$},\\
0 & \text{otherwise}
\end{cases}\\
&=:c_{2}(n).
\end{align*}
Hence we obtain the main term in \eqref{eq:main-asy-b-1}
$$c_{1}(n) \; I_0\left(\frac{\pi \sqrt{n-\frac{1}{12}}}{2\sqrt{3}}\right)+c_{2}(n) \; I_0\left(\frac{\pi \sqrt{n-\frac{1}{12}}}{4\sqrt{3}}\right).$$

For $3\le k'\le N/4$, we use \eqref{add-I-upper} to deduce that
\begin{align*}
|H_n|:=&\left|\sum_{3\le k'\le N/4}\sum_{\substack{0\le h\le 4k'\\ (h,4k')=1}} \ee{-\frac{nh}{4k'}} \omega_{h,k'} \frac{2\pi}{4k'}\; I_0\left(\frac{2\pi \sqrt{n-\frac{1}{12}}}{4\sqrt{3}k'}\right)\right|\\
\le& \sum_{3\le k'\le N/4} 4k' \frac{2\pi}{4k'} \sqrt{\frac{\pi}{8}} \frac{e^{\frac{2\pi \sqrt{n-\frac{1}{12}}}{4\sqrt{3}k'}}}{\sqrt{\frac{2\pi \sqrt{n-\frac{1}{12}}}{4\sqrt{3}k'}}}\\
\le& \frac{3^{\frac{1}{4}}\pi}{8}N^{\frac{3}{2}} \left(n-\frac{1}{12}\right)^{-\frac{1}{4}}  e^{\frac{\pi \sqrt{n-\frac{1}{12}}}{6\sqrt{3}}}\\
\le& 0.6\;N^{\frac{3}{2}} \left(n-\frac{1}{12}\right)^{-\frac{1}{4}}  e^{\frac{\pi \sqrt{n-\frac{1}{12}}}{6\sqrt{3}}}.
\end{align*}

\subsection{The asymptotic formula of $b(n)$}

It follows that the total error term is
\begin{align*}
|E(n)|&\le|S_1|+|S_2|+|T_2|+|D_n|+|H_n|\\
& \le 32.9\; e^{2\pi\varrho \left(n-\frac{1}{12}\right)} N\big(\log N+1\big)+\frac{5.2}{4}\; \frac{e^{2\pi\varrho \left(n-\frac{1}{12}\right)}N^3}{n-\frac{1}{12}}\\
&\quad +0.6\;N^{\frac{3}{2}} \left(n-\frac{1}{12}\right)^{-\frac{1}{4}}  e^{\frac{\pi \sqrt{n-\frac{1}{12}}}{6\sqrt{3}}}.
\end{align*}
Setting $N=\sqrt{2\pi (n-1/12)}$ yields
\begin{align*}
|E(n)|& \le 32.9\; e \sqrt{2\pi} \sqrt{n-\frac{1}{12}}\; \left(\log \sqrt{2\pi \left(n-\frac{1}{12}\right)}+1\right)\\
&\quad+\frac{5.2\;e}{4} \left(\sqrt{2\pi}\right)^3\sqrt{n-\frac{1}{12}}\\
&\quad +0.6\;\left(\sqrt{2\pi}\right)^{\frac{3}{2}} \sqrt{n-\frac{1}{12}} \; e^{\frac{\pi \sqrt{n-\frac{1}{12}}}{6\sqrt{3}}}\\
&\le 224.2\; \sqrt{n-\frac{1}{12}}\; \left(\log \sqrt{2\pi \left(n-\frac{1}{12}\right)}+1\right) + 55.6\; \sqrt{n-\frac{1}{12}}\\
&\quad+ 2.4\; \sqrt{n-\frac{1}{12}} \; e^{\frac{\pi \sqrt{n-\frac{1}{12}}}{6\sqrt{3}}}.
\end{align*}

Together with the main term
$$c_{1}(n) \; I_0\left(\frac{\pi \sqrt{n-\frac{1}{12}}}{2\sqrt{3}}\right)+c_{2}(n) \; I_0\left(\frac{\pi \sqrt{n-\frac{1}{12}}}{4\sqrt{3}}\right),$$
we arrive at the desired asymptotic formula.

\subsection{Proof of Theorem \ref{THM:mod 4}}

We deduce from a short computation that the sign of $M^{*}(0,4,n)-M^{*}(2,4,n)$ is determined by the main term when $n\ge 2160$. For the remaining cases, we check them directly and hence arrive at the desired inequalities.

\section{Some elementary arguments}\label{sec:ele}

It is also possible to partially prove Theorems \ref{THM:mod 3} and \ref{THM:mod 4} with merely elementary arguments.

Let $f(q)=\sum\limits_{n=-\infty}^{\infty}a_{n}q^{n}$ and $g(q)=\sum\limits_{n=-\infty}^{\infty}b_{n}q^{n}$ be two formal power series in $q$. We write $f\succeq g$ (resp.~$f\succ g$) if $a_{n}\geq b_{n}$ (resp.~$a_{n}>b_{n}$) for all $n$.

\subsection{Two cases for $m=3$}

Here we give elementary proofs of the following two inequalities:
\begin{align}
M^{*}(0,3,3n)&>M^{*}(1,3,3n),\\
M^{*}(0,3,3n+1)&<M^{*}(1,3,3n+1).
\end{align}

Recall the following two identities:
\begin{lemma}
We have
\begin{align}
(q;q)_{\infty} &=(q^{12},q^{15},q^{27};q^{27})_{\infty}-q(q^{6},q^{21},q^{27};q^{27})_{\infty}-q^{2}(q^{3},q^{24},q^{27};q^{27})_{\infty},\label{3-dissection of f1}\\
(q;q)_{\infty}^{3} &=P(q^{3})-3q(q^{9};q^{9})_{\infty}^{3},\label{3-dissection of f3}
\end{align}
where
\begin{align}
P(q) &=(q;q)_{\infty}\left(1+6\sum_{n=0}^{\infty}\left(\dfrac{q^{3n+1}}{1-q^{3n+1}}-\dfrac{q^{3n+2}}{1-q^{3n+2}}\right)\right)\label{Hirshhorn iden}\\
 &=(q;q)_{\infty}\sum_{m,n=-\infty}^{\infty}q^{m^{2}+mn+n^{2}}.\label{cubic theta function}
\end{align}
\end{lemma}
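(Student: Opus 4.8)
The plan is to read both displayed identities as classical $3$-dissections: \eqref{3-dissection of f1} is the $3$-dissection of $(q;q)_\infty$ and \eqref{3-dissection of f3} is the $3$-dissection of $(q;q)_\infty^3$, while \eqref{Hirshhorn iden} and \eqref{cubic theta function} record two equivalent expressions for the residue-$0$ part of the latter. The first two fall out of Euler's pentagonal number theorem and Jacobi's identity once the summation index is split modulo $3$ and each resulting subseries is recognized through the Jacobi triple product; the equivalence of the two forms of $P(q)$ is the only genuinely delicate ingredient.

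For \eqref{3-dissection of f1}, I would begin with Euler's pentagonal number theorem
\[
(q;q)_\infty=\sum_{n=-\infty}^{\infty}(-1)^n q^{n(3n-1)/2}
\]
and split the sum according to $n\bmod 3$, writing $n=3j$, $n=3j+1$ and $n=3j-1$. A direct calculation shows that these three families collect precisely the terms whose exponents are congruent to $0$, $1$ and $2$ modulo $3$; after extracting $q^0$, $q^1$ and $q^2$ the respective exponents become $3j(9j-1)/2$, $3j(9j+5)/2$ and $3j(9j-7)/2$. Each of these is the exponent occurring in the Jacobi triple product
\[
\sum_{n=-\infty}^{\infty}(-1)^n a^n q^{27n(n-1)/2}=\left(a,\;q^{27}/a,\;q^{27};q^{27}\right)_\infty,
\]
so taking $a=q^{12}$, $a=q^{6}$ and $a=q^{3}$ (after a harmless replacement $j\mapsto -j$ in the second family to align the signs) identifies the three subseries with $(q^{12},q^{15},q^{27};q^{27})_\infty$, $-q(q^{6},q^{21},q^{27};q^{27})_\infty$ and $-q^{2}(q^{3},q^{24},q^{27};q^{27})_\infty$. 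This is exactly \eqref{3-dissection of f1}.

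For \eqref{3-dissection of f3}, I would start from Jacobi's identity
\[
(q;q)_\infty^3=\sum_{n=0}^{\infty}(-1)^n(2n+1)q^{n(n+1)/2}.
\]
Since triangular numbers are never congruent to $2$ modulo $3$ and are congruent to $1$ exactly when $n\equiv 1\pmod 3$, the series splits into a part supported on exponents $\equiv 0$ and a part supported on exponents $\equiv 1$. Putting $n=3m+1$ in the latter, extracting $q$, and invoking Jacobi's identity once more in the variable $q^{9}$ gives $-3q(q^{9};q^{9})_\infty^3$. The complementary part, supported on multiples of $3$, is by definition $P(q^3)$, which establishes \eqref{3-dissection of f3}.

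It remains to justify the two forms \eqref{Hirshhorn iden} and \eqref{cubic theta function} of $P(q)$, that is, that the residue-$0$ part equals $(q;q)_\infty$ times the Borweins' cubic theta function $\sum_{m,n}q^{m^2+mn+n^2}$, and that the latter admits the Lambert-series expansion $1+6\sum_{n\ge0}\big(q^{3n+1}/(1-q^{3n+1})-q^{3n+2}/(1-q^{3n+2})\big)$. I expect this to be the main obstacle: a self-contained proof requires either the theory of the cubic theta functions $a(q),b(q),c(q)$ (via $b(q)=(q;q)_\infty^3/(q^3;q^3)_\infty$ together with the relation $P(q)=(q;q)_\infty\,a(q)$) or a nontrivial rearrangement of the underlying Lambert series. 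In practice I would simply cite Hirschhorn's treatment, in which both the $3$-dissection \eqref{3-dissection of f3} and the two representations of $P(q)$ are established, so that the lemma is recalled rather than reproved.
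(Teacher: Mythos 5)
Your proposal is correct and follows essentially the same route as the paper, which itself offers no proof of this lemma: it attributes \eqref{3-dissection of f1} to the Jacobi triple product (exactly the index-splitting computation you carry out), cites Wang for \eqref{3-dissection of f3}, and cites Hirschhorn and Borwein--Borwein--Garvan for the two representations of $P(q)$. You go slightly further than the paper by deriving the $-3q(q^{9};q^{9})_{\infty}^{3}$ term directly from Jacobi's identity, and you correctly identify that the only genuinely nontrivial content --- the equality of the residue-$0$ part with the two stated forms of $P$ --- must be taken from the literature, just as the authors do.
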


Eq.~\eqref{3-dissection of f1} is a simple consequence of the Jacobi triple product identity \cite[p.~21, Theorem 2.8]{Andr1976}. Eq.~\eqref{3-dissection of f3} can be found in \cite{Wang2016}. Eq.~\eqref{Hirshhorn iden} comes from \cite{Hir2001}. At last, Eq.~\eqref{cubic theta function} comes from \cite{BBG1994}.

Applying \eqref{3-dissection of f1} and \eqref{3-dissection of f3} to \eqref{eq:gen-mod-3} yields
\begin{align}
 &\sum_{n=0}^{\infty}\left(M^{*}(0,3,n)-M^{*}(1,3,n)\right)q^{n}
 =\dfrac{(q;q)_{\infty}^{4}}{(q^{3};q^{3})_{\infty}^{2}}\nonumber\\
 &\quad =\dfrac{1}{(q^{3};q^{3})_{\infty}^{2}}\left(P(q^{3})-3q(q^{3};q^{3})_{\infty}^{3}\right)\nonumber\\
 &\quad\quad\times\big((q^{12},q^{15},q^{27};q^{27})_{\infty}-q(q^{6},q^{21},q^{27};q^{27})_{\infty}-q^{2}(q^{3},q^{24},q^{27};q^{27})_{\infty}\big).\label{gf:3-dissection}
\end{align}

Consequently, we dissect
\begin{align}
 &\sum_{n=0}^{\infty}\left(M^{*}(0,3,3n)-M^{*}(1,3,3n)\right)q^{n}\nonumber\\
 &\quad=\dfrac{1}{(q;q)_{\infty}^{2}}\big((q^{4},q^{5},q^{9};q^{9})_{\infty}P(q)+3q(q,q^{8},q^{9};q^{9})_{\infty}(q^{3};q^{3})_{\infty}^{3}\big).\label{gf:3n}
\end{align}
Note that
\begin{align*}
\dfrac{(q^{4},q^{5},q^{9};q^{9})_{\infty}P(q)}{(q;q)_{\infty}^{2}} &=\dfrac{1}{(q,q^{2},q^{3},q^{6},q^{7},q^{8};q^{9})_{\infty}}\sum_{m,n=-\infty}^{\infty}q^{m^{2}+mn+n^{2}}\\
 &\succeq\dfrac{1}{1-q}\sum_{m,n=-\infty}^{\infty}q^{m^{2}+mn+n^{2}}\succeq\dfrac{1}{1-q}=\sum_{k=0}^{\infty}q^{k}\succ0,
\end{align*}
and that
\begin{align*}
 \dfrac{(q,q^{8},q^{9};q^{9})_{\infty}(q^{3};q^{3})_{\infty}^{3}}{(q;q)_{\infty}^{2}} &=\dfrac{1}{(q^{2},q^{3},q^{4},q^{5},q^{6},q^{7};q^{9})_{\infty}}\cdot\dfrac
 {(q^{3};q^{3})_{\infty}^{3}}{(q;q)_{\infty}}\succeq0,
\end{align*}
since $\frac{(q^{3};q^{3})_{\infty}^{3}}{(q;q)_{\infty}}$ is the generating function of $3$-core partitions. Hence from \eqref{gf:3n} we deduce that $M^{*}(0,3,3n)>M^{*}(1,3,3n)$ for all $n\geq0$.

We also dissect
\begin{align*}
 &\sum_{n=0}^{\infty}\left(M^{*}(0,3,3n+1)-M^{*}(1,3,3n+1)\right)q^{n}\\
 &\quad=-\dfrac{3(q^{4},q^{5},q^{9};q^{9})_{\infty}(q^{3};q^{3})_{\infty}^{3}+(q^{2},q^{7},q^{9};q^{9})_{\infty}P(q)}{(q;q)_{\infty}^{2}}.
\end{align*}
Using similar arguments, one may prove that $M^{*}(0,3,3n+1)<M^{*}(1,3,3n+1)$ for all $n\geq0$.

\subsection{Two cases for $m=4$}

We also give elementary proofs of the following two inequalities:
\begin{align}
M^{*}(0,4,4n+1)&<M^{*}(2,4,4n+1),\label{eq:ineq-mod4-4n-1}\\
M^{*}(0,4,4n+3)&>M^{*}(2,4,4n+3).\label{eq:ineq-mod4-4n-3}
\end{align}

We know from \cite[p.~40, Entry 25]{Ber1991} that
\begin{lemma}
It holds that
\begin{align}
(q;q)_{\infty}^{4}=\dfrac{(q^{4};q^{4})_{\infty}^{10}}{(q^{2};q^{2})_{\infty}^{2}(q^{8};q^{8})_{\infty}^{4}}-4q\dfrac{(q^{2};q^{2})_{\infty}^{2}(q^{8};q^{8})_{\infty}^{4}}
{(q^{4};q^{4})_{\infty}^{2}}.\label{f1^4}
\end{align}
\end{lemma}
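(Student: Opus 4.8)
The plan is to read the claimed identity as the \emph{$2$-dissection} of $(q;q)_\infty^4$: the first product on the right involves only even powers of $q$, whereas the term $-4q(\cdots)$ contributes only odd powers. Thus it suffices to compute the even and odd parts of $(q;q)_\infty^4$ separately and match them against the two products. I would run this through Ramanujan's theta functions $\varphi(q)=\sum_{n=-\infty}^{\infty}q^{n^2}$ and $\psi(q)=\sum_{n\ge0}q^{n(n+1)/2}$, whose product representations are $\varphi(q)=(q^2;q^2)_\infty^5/[(q;q)_\infty^2(q^4;q^4)_\infty^2]$ and $\psi(q)=(q^2;q^2)_\infty^2/(q;q)_\infty$. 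The first move is to record $\varphi(-q)=(q;q)_\infty^2/(q^2;q^2)_\infty$, so that $(q;q)_\infty^4=(q^2;q^2)_\infty^2\,\varphi(-q)^2$, which concentrates the whole problem onto $\varphi(-q)^2$.

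Next I would dissect $\varphi$ itself. Splitting $\sum_n q^{n^2}$ according to the parity of $n$ gives the elementary identity $\varphi(q)=\varphi(q^4)+2q\psi(q^8)$, and replacing $q$ by $-q$ yields $\varphi(-q)=\varphi(q^4)-2q\psi(q^8)$. Squaring then produces the clean even/odd separation $\varphi(-q)^2=\big(\varphi(q^4)^2+4q^2\psi(q^8)^2\big)-4q\,\varphi(q^4)\psi(q^8)$.

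For the even part I would invoke the classical duplication formula $\varphi(z)^2=\varphi(z^2)^2+4z\,\psi(z^4)^2$ with $z=q^2$, which collapses $\varphi(q^4)^2+4q^2\psi(q^8)^2$ to $\varphi(q^2)^2=(q^4;q^4)_\infty^{10}/[(q^2;q^2)_\infty^4(q^8;q^8)_\infty^4]$; multiplying by $(q^2;q^2)_\infty^2$ reproduces the first product on the right-hand side. For the odd part, the product forms give $\varphi(q^4)\psi(q^8)=(q^8;q^8)_\infty^4/(q^4;q^4)_\infty^2$, and multiplying by $(q^2;q^2)_\infty^2$ yields exactly $-4q\,(q^2;q^2)_\infty^2(q^8;q^8)_\infty^4/(q^4;q^4)_\infty^2$. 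Assembling the two parts completes the identity.

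The main obstacle is the single nontrivial input, the duplication identity $\varphi(z)^2=\varphi(z^2)^2+4z\,\psi(z^4)^2$; everything else is bookkeeping with eta-quotients. I would justify it by the two-squares (Gaussian-integer) argument, separating $\sum_{m,n}z^{m^2+n^2}$ into same-parity pairs, which become $\varphi(z^2)^2$ under the substitution $m=s+t,\ n=s-t$, and opposite-parity pairs, which become $4z\,\psi(z^4)^2$; alternatively one may simply cite it from the standard treatments of Ramanujan's theta functions. A purely product-theoretic route is also available: one can clear denominators and verify the stated relation directly as an eta-quotient identity by matching both sides against a single application of the Jacobi triple product, but the theta-function path above keeps the computation shortest.
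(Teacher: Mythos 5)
Your proof is correct, but it takes a different route from the paper, which offers no proof at all: the identity is simply cited from Berndt's \emph{Ramanujan's Notebooks, Part III} (p.~40, Entry~25). Your derivation is a clean, self-contained reconstruction of exactly that circle of results. Each step checks out: $(q;q)_\infty^4=(q^2;q^2)_\infty^2\varphi(-q)^2$; the parity split $\varphi(-q)=\varphi(q^4)-2q\psi(q^8)$; the collapse of the even part via $\varphi(z)^2=\varphi(z^2)^2+4z\psi(z^4)^2$ at $z=q^2$; and the product evaluations $\varphi(q^2)^2=(q^4;q^4)_\infty^{10}/[(q^2;q^2)_\infty^4(q^8;q^8)_\infty^4]$ and $\varphi(q^4)\psi(q^8)=(q^8;q^8)_\infty^4/(q^4;q^4)_\infty^2$ reproduce the two eta-quotients on the right-hand side after multiplying back by $(q^2;q^2)_\infty^2$. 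Your Gaussian-integer justification of the duplication formula also works as stated: writing $m+n=2u+1$, $m-n=2v+1$ for the opposite-parity pairs gives $m^2+n^2=2u(u+1)+2v(v+1)+1$ and hence $4z\psi(z^4)^2$ directly, without needing the auxiliary identity $\psi(q)^2=\varphi(q)\psi(q^2)$. What your approach buys is transparency and self-containment; what it costs is that the duplication formula you rely on is itself Entry~25(i)--(ii) of Berndt, so you are in effect reproving the cited source rather than shortcutting it --- which is perfectly acceptable, and arguably more useful to a reader than the bare citation.
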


Applying \eqref{f1^4} to \eqref{eq:gen-mod-4} gives
\begin{align}\label{add-M4}
\sum_{n=0}^{\infty}\left(M^{*}(0,4,n)-M^{*}(2,4,n)\right)q^{n}=\dfrac{(q^{4};q^{4})_{\infty}^{9}}{(q^{2};q^{2})_{\infty}^{3}(q^{8};q^{8})_{\infty}^{4}}-4q\dfrac
{(q^{2};q^{2})_{\infty}(q^{8};q^{8})_{\infty}^{4}}{(q^{4};q^{4})_{\infty}^{3}}.
\end{align}
Hence,
\begin{align*}
\sum_{n=0}^{\infty}\left(M^{*}(0,4,2n+1)-M^{*}(2,4,2n+1)\right)q^{n}=-4\dfrac{(q;q)_{\infty}(q^{4};q^{4})_{\infty}^{4}}{(q^{2};q^{2})_{\infty}^{3}}:=g(q).
\end{align*}
To prove \eqref{eq:ineq-mod4-4n-1} and \eqref{eq:ineq-mod4-4n-3}, it suffices to show that the coefficient of $q^{n}$ in $g(-q)$ is negative for all $n\ge 0$. Note that
\begin{align*}
g(-q) &=-4\dfrac{(-q;-q)_{\infty}(q^{4};q^{4})_{\infty}^{4}}{(q^{2};q^{2})_{\infty}^{3}}=-4\dfrac{(q^{4};q^{4})_{\infty}^{4}}{(q;q)_{\infty}(q^{2};q^{2})_{\infty}}\\
 &=-4\dfrac{(q^{2};q^{2})_{\infty}}{(q;q)_{\infty}}\left(\dfrac{(q^{4};q^{4})_{\infty}^{2}}{(q^{2};q^{2})_{\infty}}\right)^{2}=-4\dfrac{1}{(q;q^{2})_{\infty}}
 \left(\sum_{m=0}^{\infty}q^{m(m+1)}\right)^{2}\prec0,
\end{align*}
where the last equality follows from the Jacobi triple product identity. We hence arrive at \eqref{eq:ineq-mod4-4n-1} and \eqref{eq:ineq-mod4-4n-3}.

\begin{remark}
As an immediate consequence, we obtain the following congruence.
\begin{corollary}
For all $n\geq0$,
\begin{align*}
M^{*}(0,4,2n+1)\equiv M^{*}(2,4,2n+1)\pmod{4}.
\end{align*}
\end{corollary}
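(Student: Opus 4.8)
The plan is to read the claimed congruence directly off the generating function we have just obtained, with one small auxiliary integrality check. Recall that we established
$$\sum_{n=0}^{\infty}\left(M^{*}(0,4,2n+1)-M^{*}(2,4,2n+1)\right)q^{n}=g(q)=-4\,\dfrac{(q;q)_{\infty}(q^{4};q^{4})_{\infty}^{4}}{(q^{2};q^{2})_{\infty}^{3}}.$$
Write $h(q):=\dfrac{(q;q)_{\infty}(q^{4};q^{4})_{\infty}^{4}}{(q^{2};q^{2})_{\infty}^{3}}$, so that $g(q)=-4\,h(q)$. If I can show that $h(q)\in\mathbb{Z}[[q]]$, that is, that every coefficient in the $q$-expansion of $h(q)$ is an integer, then every coefficient of $g(q)$ is a multiple of $4$, which is precisely the assertion $M^{*}(0,4,2n+1)\equiv M^{*}(2,4,2n+1)\pmod 4$ for all $n\ge 0$.

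First I would establish the integrality of $h(q)$, and the quickest route is to reuse the computation of $g(-q)$ carried out immediately above. There we found
$$g(-q)=-4\,\dfrac{1}{(q;q^{2})_{\infty}}\left(\sum_{m=0}^{\infty}q^{m(m+1)}\right)^{2},$$
whence
$$h(-q)=\dfrac{1}{(q;q^{2})_{\infty}}\left(\sum_{m=0}^{\infty}q^{m(m+1)}\right)^{2}.$$
The factor $1/(q;q^{2})_{\infty}=\prod_{m\ge0}(1-q^{2m+1})^{-1}$ is the generating function for partitions into odd parts and hence lies in $\mathbb{Z}_{\ge0}[[q]]$, while the squared partial-theta series patently lies in $\mathbb{Z}_{\ge0}[[q]]$ as well. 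Thus $h(-q)$, being a product of two series with nonnegative integer coefficients, lies in $\mathbb{Z}_{\ge0}[[q]]$. Replacing $q$ by $-q$ merely flips signs of coefficients according to the parity of the exponent, so $h(q)\in\mathbb{Z}[[q]]$.

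With $h(q)\in\mathbb{Z}[[q]]$ in hand, the conclusion follows at once: comparing coefficients of $q^{n}$ on both sides of $g(q)=-4\,h(q)$ gives $M^{*}(0,4,2n+1)-M^{*}(2,4,2n+1)=-4\,[q^{n}]h(q)\equiv 0\pmod 4$ for every $n\ge0$. There is essentially no obstacle here; the only point that requires a (very short) argument rather than being automatic is the integrality of $h(q)$, since knowing a priori that $g(q)\in\mathbb{Z}[[q]]$ (which is clear, as its coefficients are differences of counting functions) does not by itself force divisibility by $4$. The combinatorial factorization of $h(-q)$ supplies exactly this missing integrality with no further work.
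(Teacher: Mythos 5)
Your proof is correct and follows the same route as the paper, which derives the congruence as an immediate consequence of the generating function $g(q)=-4\,(q;q)_{\infty}(q^{4};q^{4})_{\infty}^{4}/(q^{2};q^{2})_{\infty}^{3}$ having an explicit factor of $-4$ times a power series with integer coefficients. Your detour through $h(-q)$ to verify integrality is fine but unnecessary, since any finite product of factors $(q^{a};q^{a})_{\infty}^{\pm k}$ already lies in $\mathbb{Z}[[q]]$.
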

\end{remark}

\section{Final remarks}
We conclude with two questions that will merit further investigation.
\begin{enumerate}[1.]
\item In view of \eqref{gf:3-dissection}, one can readily obtain
\begin{align}
&\sum_{n=0}^{\infty}\left(M^{*}(0,3,3n+2)-M^{*}(1,3,3n+2)\right)q^n\nonumber\\
&\qquad=\frac{(q^9;q^9)_{\infty}}{(q;q)_{\infty}^2}\left(3(q^2,q^7;q^9)_{\infty}(q^3;q^3)_{\infty}^3
-(q,q^8;q^9)_{\infty}P(q) \right). \label{add-M01-3}
\end{align}
However, it is unclear whether one can find an elementary proof for the case $n\equiv 2 \pmod{3}$ of \eqref{mod 3:pos} by analyzing \eqref{add-M01-3}.

Moreover, from \eqref{add-M4} we have
\begin{align}\label{add-M04-2n}
\sum_{n=0}^{\infty}\left(M^{*}(0,4,2n)-M^{*}(2,4,2n)\right)q^{n}=\frac{(q^2;q^2)_{\infty}^9}{(q;q)_{\infty}^3(q^4;q^4)_{\infty}^4}.
\end{align}
From \cite[Lemma 4.1]{ABCKM} we find
\begin{align}\label{add-2-dis}
\frac{1}{(q;q)_{\infty}}=\frac{(q^{16};q^{16})_{\infty}}{(q^2;q^2)_{\infty}^2}\left((-q^6,-q^{10};q^{16})_{\infty}+q(-q^2,-q^{14};q^{16})_{\infty} \right).
\end{align}
Substituting \eqref{add-2-dis} into \eqref{add-M04-2n}, extracting the terms in which the exponent of $q$ is even and odd, respectively, after replacing $q^2$ by $q$ and then changing $q$ to $-q$, we obtain
\begin{align}
&\sum_{n=0}^{\infty}\left(M^{*}(0,4,4n)-M^{*}(2,4,4n)\right)(-q)^{n}\nonumber\\
&=\dfrac{(q^{2};q^{2})_{\infty}^{2}(q^{8};q^{8})_{\infty}}{(q;q)_{\infty}^{2}(q^{4};q^{4})_{\infty}^{2}}\left((q^{3},q^{5};q^{8})_{\infty}\dfrac{(q^{4};q^{4})_{\infty}
 ^{5}}{(q^{8};q^{8})_{\infty}^{2}}-2q(q,q^{7};q^{8})_{\infty}\dfrac{(q^{2};q^{2})_{\infty}^{2}(q^{8};q^{8})_{\infty}^{2}}{(q^{4};q^{4})_{\infty}}\right),\label{gf:4n}\\
& \sum_{n=0}^{\infty}\left(M^{*}(0,4,4n+2)-M^{*}(2,4,4n+2)\right)(-q)^{n}\nonumber\\
&=\dfrac{(q^{2};q^{2})_{\infty}^{2}(q^{8};q^{8})_{\infty}}{(q;q)_{\infty}^{2}(q^{4};q^{4})_{\infty}^{2}}\left(2(q^{3},q^{5};q^{8})_{\infty}\dfrac{(q^{2};q^{2})_{\infty}
 ^{2}(q^{8};q^{8})_{\infty}^{2}}{(q^{4};q^{4})_{\infty}}+(q,q^{7};q^{8})_{\infty}\dfrac{(q^{4};q^{4})_{\infty}
 ^{5}}{(q^{8};q^{8})_{\infty}^{2}}\right).\label{gf:4n+2}
\end{align}
It would be appealing to study the sign patterns of the coefficients on the left-hand sides of \eqref{gf:4n} and \eqref{gf:4n+2}.

\item According to the second author's recent work with Fu \cite{FT2018-2} involving multiranks, we notice that for the purpose of combinatorially interpreting
congruences \eqref{partition pairs mod 5}, there are a number of alternatives that will work equally well as Garvan's bicrank. Namely,
\begin{align*}
\textrm{bicrank}_{\ell}:=c^{*}(\pi_{1})+\ell c^{*}(\pi_{2})\quad\quad \textrm{for~}\ell\equiv2,3\pmod{5},
\end{align*}
where we refer to \cite[Sect.~6]{Gar2010} for the definition of $c^{*}(\pi)$. We remark that the $\ell=1$ and $2$ cases respectively correspond to Andrews' and Garvan's bicranks. Following the same line of proving Theorems \ref{th:main-asy} and \ref{th:main-asy-b}, one may obtain a number of asymptotic formulas and inequalities for these partition statistics involving $2$-colored partitions.
\end{enumerate}

\section*{Acknowledgement}
The second author was supported by the National Natural Science Foundation of China grant 11501061.  The third author was partially supported by ``the Fundamental Research Funds for the Central Universities'' and a start-up research grant of the Wuhan University.

\bibliographystyle{amsplain}

\begin{thebibliography}{99}
\bibitem{handbook} M. Abramowitz and I. A. Stegun (eds.), \textit{Handbook of mathematical functions with formulas, graphs, and mathematical tables}, 	United States Department of Commerce, National Bureau of Standards, 10th printing, 1972.

\bibitem{Andr1976}
G. E. Andrews, \textit{The theory of partitions}, Encyclopedia of Mathematics and its Applications, Vol. 2. Addison-Wesley Publishing Co., Reading, Mass.-London-Amsterdam, 1976. xiv+255 pp. (Reprinted: Cambridge University Press, London and New York, 1984).

\bibitem{And2008}
G. E. Andrews, A survey of multipartitions: congruences and identities, in: \textit{Surveys in number theory}, 1--19, \textit{Dev. Math.}, 17, Springer, New York, 2008.

\bibitem{ABCKM}
G. E. Andrews, B. C. Berndt, S. H. Chan, S. Kim, and A. Malik, Four identities for third order mock theta functions, preprint.


\bibitem{AG1988}
G. E. Andrews and F. G. Garvan, Dyson's crank of a partition, \textit{Bull. Amer. Math. Soc. (N.S.)} \textbf{18} (1988), no. 2, 167--171.

\bibitem{AL2000}
G. E. Andrews and R. Lewis, The ranks and cranks of partitions moduli $2$, $3$, and $4$, \textit{J. Number Theory} \textbf{85} (2000), no. 1, 74--84.

\bibitem{Apo1990}
T. M. Apostol, \textit{Modular functions and Dirichlet series in number theory. Second edition}, Graduate Texts in Mathematics, 41. Springer-Verlag, New York, 1990. x+204 pp.

\bibitem{Arf1985}
G. Arfken, \textit{Mathematical methods for physicists}, Academic Press, New York-London 1966 xvi+654 pp.

\bibitem{ASD1954}
A. O. L. Atkin and P. Swinnerton-Dyer, Some properties of partitions, \textit{Proc. London Math. Soc. (3)} \textbf{4} (1954), 84--106.

\bibitem{BS2013}
N. D. Baruah and B. K. Sarmah, Identities and congruences for general partition and Ramanujan's tau functions, \textit{Indian J. Pure Appl. Math} \textbf{44} (2013), no. 5, 643--671.

%\bibitem{BG2002}
%A. Berkovich, F.G. Garvan, Some observations on Dyson's new symmetries of partitions, \textit{J. Combin. Theory Ser. A} \textbf{100} (2002), no. 1, 61--93.

\bibitem{Ber1991}
B. C. Berndt, \textit{Ramanujan's notebooks. Part III}, Springer-Verlag, New York, 1991. xiv+510 pp.

%\bibitem{Ber2006}
%B. C. Berndt, \textit{Number theory in the spirit of Ramanujan},
%Student Mathematical Library, \textbf{34}. American Mathematical Society, Providence, RI, 2006. xx+187 pp.

\bibitem{BBG1994}
J. M. Borwein, P. B. Borwein, and F. G. Garvan, Some cubic modular identities of Ramanujan, \textit{Trans. Amer. Math. Soc.} \textbf{343} (1994), no. 1, 35--47.

\bibitem{chan2005}
O-Y. Chan, Some asymptotics for cranks, \textit{Acta Arith.} \textbf{120} (2005), no. 2, 107--143.

\bibitem{CM2016}
S. H. Chan and R. Mao, The rank and crank of partitions modulo $3$, \textit{Int. J. Number Theory} \textbf{12} (2016), no. 4, 1027--1053.

\bibitem{Dys1944}
F. J. Dyson, Some guesses in the theory of partitions, \textit{Eureka} \textbf{8} (1944), 10--15.

\bibitem{FT2018-2} S. Fu and D. Tang, Multiranks and classical theta functions, \textit{Int. J. Number Theory} {\bf 14} (2018), no. 2, 549--566.

\bibitem{FT2018}
S. Fu and D. Tang, On a generalized crank for $k$-colored partitions, \textit{J. Number Theory} \textbf{184} (2018), 485--497.

\bibitem{Gar2010}F.~G.~Garvan, Biranks for partitions into 2 colors, Ramanujan rediscovered, 87--111, \textit{Ramanujan Math. Soc. Lect. Notes Ser.}, 14, Ramanujan Math. Soc., Mysore, 2010.

\bibitem{HL2004}
P. Hammond and R. Lewis, Congruences in ordered pairs of partitions, \textit{Int. J. Math. Math. Sci.} \textbf{45-48} (2004), 2509--2512.

\bibitem{Hir2001}
M. D. Hirshhorn, Three classical results on representations of a number, in \textit{The Andrews Festschrift} (Springer, 2001), pp. 159--165.

\bibitem{Kan2004}
D. Kane, Resolution of a conjecture of Andrews and Lewis involving cranks of partitions, \textit{Proc. Amer. Math. Soc.} \textbf{132} (2004), no. 8, 2247--2256.

\bibitem{Kot2015}
V. Kot{\v e}{\v s}ovec, A method of finding the asymptotics of $q$-series based on the convolution of generating functions, arXiv preprint (2015).
(\href{http://arXiv:1509.08708}{arXiv:1509.08708}).


\bibitem{Ram1927}
S. Ramanujan, Some properties of $p(n)$, the number of partitions of $n$, Paper 25 of \textit{Collected Papers of S.~Ramanujan}, Cambridge University Press, Cambridge, 1927; reprinted by Chelsea, New York, 1962; reprinted by the American Mathematical Society, Providence, RI, 2000.

\bibitem{Wang2016} L. Wang, Arithmetic identities and congruences for partition triples with 3-cores, \textit{Int. J. Number Theory} \textbf{12} (2016), no. 4, 995--1010.


\end{thebibliography}

\end{document}